\newsavebox{\savepar}
\DeclareMathAlphabet{\pazocal}{OMS}{zplm}{m}{n}
\newtheorem{corollary}{Corollary}
  \newtheorem{theorem}{Theorem}[section]
  \theoremstyle{definition}
  \theoremstyle{remark}
\newtheorem{lemma}[theorem]{\bf Lemma}
\newtheorem{proposition}[theorem]{\bf Proposition}
\theoremstyle{definition}
\theoremstyle{remark}
\newcommand{\R} {{\mathbb{R}}}
\newcommand{\N} {{\mathbb{N}}}
\begin{document}

\title{ Richardson extrapolation for the discrete iterated modified projection solution  }
\author{  Rekha P.  KULKARNI \thanks{Department of Mathematics, I.I.T. Bombay, 
Powai, Mumbai 400076, India,  rpk@math.iitb.ac.in,   } and Gobinda RAKSHIT \thanks{
gobindarakshit@math.iitb.ac.in} 
}

\date {}
\maketitle

\begin{abstract}

Approximate solutions of  Urysohn integral equations using projection methods  involve  integrals which need to be evaluated using a numerical quadrature formula. It gives rise to the discrete versions of the projection methods. For $r \geq 1,$ a space of piecewise polynomials of 
degree $\leq r - 1$ with respect to an uniform partition is chosen to be the approximating space and the projection 
is chosen to be the interpolatory projection at $r$ Gauss points. Asymptotic expansion for 
the iterated modified projection solution is available in literature. 
In this paper, we obtain an asymptotic  expansion for the  discrete  iterated modified projection solution and use 
Richardson extrapolation to improve the order of convergence. 
Our results indicate  a choice of a numerical quadrature which preserves 
the order of convergence in the continuous case. 
 Numerical results are given for a specific example.
\end{abstract}

\bigskip\noindent
Key Words : Urysohn integral operator,   Interpolatory projection,  Gauss points, Nystr\"{o}m Approximation, Richardson extrapolation.

\smallskip
\noindent
AMS  subject classification : 45G10, 65J15, 65R20

\newpage
\setcounter{equation}{0}
\section {Introduction}

Let $\mathcal{X} = L^\infty [0, 1]$ and 
consider the following  Urysohn integral operator 
\begin{equation}\label{eq:1.1}
\mathcal{K} (x)(s)  = \int_0^1 \kappa (s, t, x (t))  d t, \;\;\; s \in [0, 1], \; x \in \mathcal{X},
\end{equation}
where $\kappa (s, t, u)$ is a continuous real valued function
defined on 
$ \Omega =[0, 1]\times[0, 1] \times \R.$
Then  $\mathcal{K} $ is a compact operator from $L^\infty [0, 1]$ to $C [0, 1].$ 
Assume that for $ f \in C[0, 1],$
\begin{equation}\label{eq:1.2}
x - \mathcal{K} (x) = f
\end{equation}
has a unique solution $\varphi.$

We are interested in  computable approximate solutions of the above equation. We consider some projection methods associated 
with a sequence of interpolatory projections converging to the Identity operator pointwise.

For $r \geq 1,$ let $\mathcal{X}_n$ denote the space of piecewise polynomials of degree $\leq r - 1$ with respect to 
a uniform partition of $[0, 1]$ with $n $ subintervals. Let $\displaystyle {h = \frac {1}{n}}$ denote the 
mesh of the partition and let $Q_n: C [0, 1] \rightarrow \mathcal{X}_n$ be
the interpolation operator  at $r$ Gauss points.  
In Grammont et al \cite{Gram3}  the following modified projection method is investigated:
\begin{equation}\nonumber
\varphi_n^M - \mathcal{K}_n^M (\phi_n^M) = f,
\end{equation}
where
\begin{equation}\label {eq:1.3}
\mathcal{K}_n^M (x) = Q_n \mathcal{K} (x) + \mathcal{K} (Q_n x) - Q_n \mathcal{K} (Q_n x).
\end{equation}
The iterated modified projection solution is defined as
\begin{equation}\nonumber
\tilde {\varphi}_n^M = \mathcal{K} (\phi_n^M) + f.
\end{equation}
If $\displaystyle { \frac {\partial \kappa} {\partial u} \in C^{3 r }(\Omega)}$ and $ f \in C^{2 r} [0, 1],$ then
the following orders of convergence are proved in 
 Grammont et al \cite{Gram3}:
\begin{equation}\nonumber
\| \varphi - \varphi_n^M \|_\infty= O (h^{3r}), \;\;\; \;\;\; \;\;\;  \| \varphi - \tilde{\varphi}_n^M \|_\infty= O (h^{4 r}).
\end{equation}
Under the assumption of $\displaystyle { \frac {\partial \kappa} {\partial u} \in C^{3 r +3}(\Omega)}$ and $ f \in C^{2 r + 2} [0, 1],$ 
the following asymptotic series expansion  is obtained in 
 Kulkarni-Nidhin \cite{Kul3} :
\begin{equation}\label{eq:A2}
  \tilde{\varphi}_n^M  =  \varphi  +   \zeta   \; h^{ 4 r } + O \left ( h^{ 4 r + 2} \right ),
\end{equation}
where the function $\zeta$ is independent of $h.$

In practice, it is necessary to replace all the integrals  by a numerical quadrature formula, giving rise to the
 discrete versions of the above methods.

 We choose a composite numerical quadrature formula with a degree of precision $d -1$ and with respect to a uniform partition of
$[0, 1]$ with $m = n p, \; p \in \N,$ subintervals. Let $\displaystyle {\tilde {h} = \frac {1}{m}}$ denote the mesh of this fine partition. Then the Nystr\"{o}m operator $\mathcal {K}_m$ is obtained by replacing the integral in the definition 
(\ref{eq:1.1}) of $\mathcal {K}$ and the discrete version of the operator $\mathcal{K}_n^M$ defined in (\ref{eq:1.3}) is obtained on replacing $\mathcal {K}$ by $\mathcal {K}_m$ and is denoted by $ \tilde  {\mathcal{K}}_n^M.$ The Nystr\"{o}m solution is denoted by $\varphi_m$  and it satisfies
\begin{eqnarray*}
\varphi_m - \mathcal {K}_m (\varphi_m) = f.
\end{eqnarray*}
The discrete modified projection  and the  discrete iterated modified projection solutions are denoted respectively by $z_n^M$ and $\tilde{z}_n^M$ and are defined as
\begin{eqnarray*}
z_n^M - \tilde {\mathcal{K}}_n^M (z_n^M) = f, \;\;\; \tilde{z}_n^M = \mathcal {K}_m z_n^M + f.
\end{eqnarray*}
In Kulkarni-Rakshit \cite{Kul1} the following estimates are proved:

If $\displaystyle {d \geq 2 r, \; \kappa \in C^d (\Omega), \;\;\; \frac {\partial \kappa } {\partial u} \in C^{2 r } (\Omega ) \mbox { and } \;  f \in C^{d}
 ( [0, 1]),}$ then
\begin{equation} \label{eq:1.7}
\|{z}_n^M - \varphi \|_\infty = O \left ( \max \left \{\tilde{h}^d, h^{3 r} \right \} \right).
\end{equation}
\hspace*{0.8 cm} If $\displaystyle {d \geq 2 r, \;\;\; \frac {\partial \kappa } {\partial u} \in 
C^{\max \{d, 3 r \} } (\Omega ) \mbox { and } \;  f \in C^{d}
 ( [0, 1]),}$ then
\begin{equation} \label{eq:1.8}
 \|\tilde{z}_n^M - \varphi \|_\infty = O \left  (\max \left \{\tilde{h}^d, h^{4 r} \right \} \right).
\end{equation}
In this paper we show that 
if $$\displaystyle {d \geq 2 r, \; \frac {\partial \kappa} {\partial u}  \in C^{ \max \{d, \; 3 r + 3 \}} (\Omega),
\ \;\;\; \mbox {and} \;\;\; f \in C^{\max \{d, \; 2 r + 2 \}} [0, 1]},$$
 then in fact
\begin{eqnarray} \label{eq:1.9}
 \tilde{z}_n^M = \varphi_m + \alpha \; h^{4 r} + O \left (  h^{2 r} \max \left \{ \tilde{h}^{d}, h^{2 r + 2} \right \} \right ), \end{eqnarray}
where the term $\alpha$ is independent of $h.$
 If we choose $\tilde{h}$ and $d$ such that $\tilde{h}^{d} \leq h^{2 r + 2},$ then using the Richardson extrapolation, an estimate of $\varphi_m$ of the order of $h^{4 r + 2}$ could be obtained.

The proof of  (\ref{eq:A2}) is based on an asymptotic expansion for the term 
$\mathcal {K}' (\varphi) (Q_n v - v),$ where $\mathcal {K}' (\varphi)$ denotes the Fr\'echet derivative of 
$\mathcal {K}$ at $\varphi$ and $v$ is a smooth function. This is obtained by using the Euler-MacLaurin formula. 
The main difficulty in proving (\ref{eq:1.9}) is that 
a discrete version of the Euler-MacLaurin formula is not available which is needed for obtaining an 
asymptotic expansion for $\mathcal {K}_m' (\varphi_m) (Q_n v - v).$ We  prove (\ref{eq:1.9}) using a different approach.

The paper is arranged as follows. In Section 2 we define discrete versions of the modified projection method and of its iterated version.
Section 3 is devoted to the asymptotic series expansions for $\mathcal {K}_m' (\varphi_m) (Q_n v - v)$ and for terms 
involving higher order Fr\'echet derivatives of the operator $\mathcal{K}_m$ at $\varphi_m.$ In Section 4 we prove our main result,
that is, the expansion (\ref{eq:1.9}). 
Numerical results are given in Section 5.

\setcounter{equation}{0}
\section{Discrete Projection Methods}

In the discrete projection methods, all the integrals are replaced by a numerical quadrature formula. We first 
define a numerical quadrature formula and the Nystr\"{o}m operator $ \mathcal{K}_m$ which approximates 
$ \mathcal{K}.$
  Let $m \in \mathbb{N}$ and
 consider the following uniform partition of $[0, 1]:$
\begin{equation}\label{eq:2.1}
\Delta_m: 0  <  \frac{1} {m}  < \cdots <   \frac{m-1} {m}   <  1.
\end{equation}
Let 
\begin{equation}\nonumber
 \tilde{h} = \frac {1} {m} \;\;\; \mbox {and} \;\;\; s_i = \frac {i} {m},  \;\;\; i = 0, \ldots, m.
 \end{equation}
Consider a basic quadrature rule
  \begin{equation}\nonumber
  \int_0^1 f (t) d t \approx \sum_{i=1}^\rho \omega_i f (\mu_i)
  \end{equation}
  and let
\begin{equation}\nonumber
\zeta_i^j  = s_{j-1} +   \mu_i \;  \tilde h,  \;\;\; i = 1, \ldots, \rho, \;\;\; j = 1, \ldots, m.
\end{equation}
A composite integration rule with respect to the partition  (\ref{eq:2.1}) 
is then defined as
\begin{eqnarray}\label{eq:2.3}
 \int_0^1 f (t) d t &=& \sum_{j=1}^m \int_{s_{j-1}}^{s_j} f (t) d t 
  \approx  \tilde h \sum_{j=1}^m  \sum_{i=1}^\rho \omega_i \; f (\zeta_i^j ).
\end{eqnarray}
For $k \geq 1,$ let
$$C_{\Delta_m}^k \left ([0, 1] \right )= \left \{g \in L^\infty[0, 1] : g|_{\left [s_{j-1}, s_j \right]}  \in C^k \left (\left [s_{j-1}, s_j \right ] \right), \; j = 1,\ldots, m \right \}.$$
The error in the numerical quadrature  is assumed to be of the following form: 
There is a positive integer  $ d $ such that    if $f \in C_{\Delta_m}^{d} ([0, 1]),$
then
\begin{equation}\label{eq:2.4}
\left | \int_0^1 f (t) d t 
 - \tilde h \sum_{j=1}^m  \sum_{i=1}^\rho \omega_i \; f (\zeta_i^j ) \right | \leq C_1\left \|f^{(d)} \right \|_\infty\tilde {h}^{d},
\end{equation}
where  $f^{(d)} $ denotes $d$ th derivative of $f$ and $C_1$ is a constant independent of $\tilde{h}.$

Assume that $\kappa \in C^d (\Omega)$ and that $f \in C^{d} ( [0, 1]).$ Then it follows that
  $\varphi \in C^{d}( [0, 1]). $ We also assume throughout that
  $\displaystyle{\left(I - \mathcal{K}' (\varphi)\right)^{-1}}$  is a bounded linear operator from $C[0, 1]$ to itself.

 We replace the integral in (\ref{eq:1.1}) by the numerical quadrature formula (\ref{eq:2.3}) and define
the Nystr\"{o}m operator as
\begin{equation}\nonumber
\mathcal{K}_m (x) (s)  =  \tilde {h}  \sum_{j=1}^m  \sum_{i=1}^\rho \omega_i \;  \kappa {\left(s,  \zeta_i^j, x{\left(\zeta_i^j \right)} \right)}, \;\;\; s \in [0, 1], \;\;\; x \in C ([0, 1]).
\end{equation}
Then since $\kappa \in C^d (\Omega)$  and $\varphi \in C^{d}( [0, 1]), $ it follows that
\begin{eqnarray}\label{eq:2.5}
\|\mathcal{K} (\varphi)  - \mathcal{K}_m (\varphi ) \|_\infty  = O \left (\tilde{h}^{d} \right).
\end{eqnarray}
In the Nystr\"{o}m method,  (\ref{eq:1.2}) is approximated by
\begin{equation}\label{eq:2.6}
x_m - \mathcal{K}_m (x_m) = f.
\end{equation}
Fix $\delta > 0$ and define
 \begin{equation}\nonumber
B (\varphi, \delta ) = \{ x: \|x - \varphi \|_\infty \leq \delta \}.
\end{equation}  
Then for $ m $ sufficiently large, the  equation (\ref{eq:2.6})  has a unique solution $\varphi_m$ in $B (\varphi, \delta )$
and
\begin{eqnarray}\label{eq:2.8}
\|\varphi - \varphi_m \|_\infty  = O \left (\tilde{h}^{d} \right).
\end{eqnarray}
Note that the Fr\'echet derivative of $\mathcal{K}_m$ is given by
\begin{equation*}
\mathcal{K}_m' (x) v (s)  =  \tilde {h}  \sum_{j=1}^m  \sum_{i=1}^\rho \omega_i \;  \frac {\partial \kappa } {\partial u} \left (s,  \zeta_i^j, x \left (\zeta_i^j \right ) \right) v \left (\zeta_i^j \right ), \;\;\; s \in [0, 1], \;\;\; v \in C ([0, 1]).
\end{equation*}
If $\displaystyle { \frac {\partial \kappa} {\partial u} \in C^d (\Omega)} $ and $v \in C_{\Delta_m}^d ([0, 1]),$ then from (\ref{eq:2.4}),
\begin{eqnarray}\label{eq:2.9}
\|\mathcal{K}' (\varphi) v - \mathcal{K}_m' (\varphi ) v \|_\infty \leq C_2 \|v\|_{d, \infty} \tilde{h}^d,
\end{eqnarray}
where $\displaystyle {\|v\|_{d, \infty} = \max_{0 \leq j \leq d} \|v^{(j)}\|_\infty}$ and $C_2$ is a constant 
independent of $\tilde {h}.$

We now define  the interpolatory projection at $r$ Gauss points.
Let $n \in \mathbb{N}$ and consider the following uniform partition of $[0, 1]:$
\begin{equation}\nonumber
\Delta_n: 0  <  \frac{1} {n}  < \cdots <   \frac{n-1} {n}   <  1.
\end{equation}
Define
\begin{equation}\nonumber
t_j = \frac {j} {n}, \;\;\; j = 0, \ldots, n \;\;\; \mbox{and} \;\;\; h = t_{j} - t_{j-1} = \frac {1} {n}.
\end{equation}
 For a positive integer $r,$
 let 
 \begin{equation}\nonumber
  \mathcal{X}_n = \left \{  g \in L^\infty [0, 1]: g |_{\left [t_{j-1}, t_j \right ]} \; \mbox{is a polynomial of degree} \; \leq r - 1, \; j = 1, \ldots, n
  \right \}.
  \end{equation}
  Let $ \{q_1,  \cdots,   q_r \}$
  denote the Gauss-Legendre zeros of order $r$ in $[0, 1].$ 
  The  $n r $ collocation nodes are chosen as follows:
   \begin{equation}\nonumber
  \tau_i^k  = t_{k-1} + q_i \; h, \;\;\; i = 1, \ldots, r, \;\;\; k = 1, \ldots, n.
  \end{equation}
  Define the interpolation operator $Q_n: C [0, 1] \rightarrow \mathcal{X}_n$ as
   \begin{equation}\nonumber
   (Q_n x) (\tau_i^k) = x (\tau_i^k), \;\;\; i = 1, \ldots, r, \;\;\; k = 1, \ldots, n.
   \end{equation}
Using the Hahn-Banach extension theorem as in Atkinson et al \cite {AtkG}, $Q_n$ can be extended to $L^\infty [0, 1].$  
Note that  for $x \in C [0, 1],$ $Q_n x \rightarrow x$ as $ n \rightarrow \infty.$ 
As a consequence, 
\begin{equation}\label {eq:2.10}
\sup_{n} \|Q_n |_{C [0, 1]}\| \leq C_3.
\end{equation}
Also, if $u \in C^r ([0,1]),$
then
\begin{eqnarray} \label{eq:2.14}
\left \|u - Q_n u \right \|_\infty \leq C_4 \left \|u^{(r)} \right \|_\infty h^r,
\end{eqnarray}
where  $C_4$ is a constant independent of $n.$
Throughout this paper we choose $d \geq 2 r$ and
$  m = p n $ {for some} $ p \in \N.$ 
Then $ \displaystyle { \tilde {h} = \frac { h} {p} \leq h.}$
It follows from (\ref{eq:2.8}), (\ref{eq:2.10}) and (\ref{eq:2.14}) that
\begin{eqnarray} \nonumber
\|\varphi_m - Q_n \varphi_m\|_\infty \leq \|\varphi - Q_n \varphi\|_\infty + \|(I - Q_n ) (\varphi_m - \varphi)\|_\infty
= O \left (\max \left  \{h^r, \tilde{h}^d  \right \} \right).
\end{eqnarray}
 Hence it follows that
\begin{eqnarray} \label{eq:2.15}
\|\varphi_m - Q_n \varphi_m\|_\infty 
= O \left(h^r \right).
\end{eqnarray}
 The discrete modified projection operator is defined by replacing ${\mathcal{K}}$ with ${\mathcal{K}}_m$ in (\ref{eq:1.3}) as follows:
\begin{equation}\label{eq:2.16}
\tilde{\mathcal{K}}_n^M (x) = Q_n {\mathcal{K}}_m (x) + {\mathcal{K}}_m (Q_n x) - Q_n  {\mathcal{K}}_m (Q_n x).
\end{equation}
Discrete Modified Projection Method: For $n$ and $m$ big enough, 
\begin{equation}\label{eq:2.17}
 x_n -  \tilde{\mathcal{K}}_n^M (x_n) =  f
\end{equation}
has a unique solution ${z}_n^M$ in a $B (\varphi, \delta).$

\noindent
The Discrete Iterated  Modified Projection solution is defined as
\begin{equation}\label{eq:2.18}
 \tilde{z}_n^M =  {\mathcal{K}}_m ( {z}_n^M) +  f.
\end{equation}


\setcounter{equation}{0}
\section{Asymptotic Series Expansions}


By assumption, $I - \mathcal {K}' (\varphi) $ is invertible.
Let
$$M = \left (I - \mathcal {K}' (\varphi) \right )^{-1} \mathcal {K}' (\varphi), \;\;\; \Psi (t) = (t - q_1) \cdots (t - q_r), \; t \in [0, 1].$$
We quote the following asymptotic series expansions from Kulkarni-Nidhin \cite{Kul3}.

\noindent
If $ \frac{\partial \kappa}{\partial u}\in C^{2 r + 2}(\Omega)$ and $\psi \in C^{2 r + 2} ([0, 1]),$ then
\begin{equation} \label {eq:3.1}
 \mathcal {K}' (\varphi)  ( Q_n \psi - \psi)   =  
 {T} ( \psi ) h^{ 2 r } + O (h^{2 r + 2}),
\end{equation}
\begin{equation} \label {eq:3.2}
  M  ( Q_n \psi - \psi)   =  
 {U} ( \psi ) h^{ 2 r } + O (h^{2 r + 2}),
\end{equation}
where for $ s \in [0, 1],$
\begin{equation*} 
  T ( \psi )  (s) = d_{ 2 r, 2 r } \;   \mathcal {K}' (\varphi) 
 { \psi^{( 2 r )}} (s)+ \sum_{i = r }^{2 r - 1} d_{2 r , i} \left [ \left ( \frac {\partial} {\partial t} \right )^{2 r - 1 - i} \ell (s, t) \psi^{(i)} (t) 
\right ]_{t = 0}^1,
\end{equation*}
\begin{equation*} 
  U ( \psi )  (s) = d_{ 2 r, 2 r } \;   M
 { \psi^{( 2 r )}} (s)+ \sum_{i = r }^{2 r - 1} d_{2 r , i} \left [ \left ( \frac {\partial} {\partial t} \right )^{2 r - 1 - i} m (s, t) \psi^{(i)} (t) 
\right ]_{t = 0}^1,
\end{equation*}
$$ d_{ 2 r , i} = - \int_0^1  \Phi_{i} (\tau) \frac {B_{2 r  - i}  (\tau) } { ( 2 r  - i)! } \;
\Psi (\tau) d \tau, \;\;\; i = r , \ldots, 2 r,$$
with
\begin{equation}\nonumber
 \Phi_{i} (\tau) = \int_0^1 \frac {(\sigma - \tau)^{i - r }} { ( i - r )!} \; 
\frac {[q_1, q_2, \cdots, q_r, \tau ] (\cdot - \sigma)_+^{r-1} } { (r-1)!} 
d \sigma. 
\end{equation}
Note that the coefficients $d_{2 r, i}$ are independant of $h.$
Also,
\begin{equation*} 
 \mathcal {K}'' (\varphi)  ( Q_n \psi - \psi)^2   =  
 {V_1} ( \psi ) h^{ 2 r } + O (h^{2 r + 2}), \;\;\; \mathcal {K}^{(3)} (\varphi)  ( Q_n \psi - \psi)^3   =  
 {V_2} ( \psi ) h^{ 3 r } + O (h^{3 r + 1}),
\end{equation*}
where
\begin{eqnarray*}
{V_1} ( \psi ) = \left ( \int_0^1 \Psi (\tau)^2 \Phi_{r}(\tau)^2 d \tau \right ) \mathcal {K}'' (\varphi)   
\left (\psi^{(r)} \right )^2
\end{eqnarray*}
and
\begin{eqnarray*}
{V_2} ( \psi ) = \left ( \int_0^1 \Psi (\tau)^3 \Phi_{r}(\tau)^3 d \tau \right ) \mathcal {K}^{(3)} (\varphi)   
\left (\psi^{(r)} \right )^3.
\end{eqnarray*}
Let $\displaystyle {\frac {\partial^4 \kappa} {\partial u^4} \in C^{2 r} (\Omega) }$ and $v_k \in C ([0, 1]), \; 1 \leq k \leq 4.$ The 
Fr\'echet derivatives of $\mathcal{K}_m$ are as follows:
\begin{equation}\nonumber
 \mathcal {K}_m^{(k)} (x) (v_1, \ldots, v_k )  (s) = \tilde {h}  \sum_{j=1}^m  \sum_{i=1}^\rho  \omega_i \;  \frac {\partial^k \kappa } {\partial u^k} \left(s,  \zeta_i^j, x \left (\zeta_i^j \right )\right) v_1 \left (\zeta_i^j \right )  \cdots v_k \left (\zeta_i^j \right ), \;\;\; s \in [0, 1].
\end{equation}
We introduce the following notations:
$$ D^{(i, j, k)} \kappa  (s, t, u) =  \frac {\partial ^{i + j + k} \kappa } {\partial s^i \partial t^j \partial u^k} (s, t, u),
\;\;\; C_{5} = \max_{\stackrel{0 \leq i + j \leq 2 r}{1 \leq k \leq 4}}  \max_{\stackrel {s, t \in [0, 1]}{|u| \leq \|\varphi \|_\infty + \delta }}
\left | D^{(i, j, k)} \kappa  (s, t, u) \right |.$$
Using the Mean Value Theorem, it can be seen that 
\begin{eqnarray}\label{eq:A1}
\left \|\mathcal {K}_m'  (\varphi) - \mathcal {K}_m'  (\varphi_m) \right \| \leq \left (\sum_{i=1}^\rho |\omega_i| \right ) C_5 \tilde{h}^d.
\end{eqnarray}

We now prove asymptotic series expansions for the first three  Fr\'echet derivatives of the Nystr\"{o}m operator $\mathcal{K}_m$ at
$\varphi_m.$
\begin{proposition}  \label {prop:3.1}
If $ \frac{\partial \kappa}{\partial u}\in C^{ \max \{d, 2 r + 2 \}}(\Omega)$ and $f \in C^{ \max \{d, 2 r + 2 \}} ( [0, 1]), $   then
\begin{eqnarray}\label {eq:3.3}
 \mathcal {K}_m' (\varphi_m)  ( Q_n \varphi_m - \varphi_m)  
   & = & {T} ( \varphi ) h^{ 2 r } + O \left ( \max \left \{\tilde{h}^d,  h^{2 r + 2} \right \} \right ),
\end{eqnarray}
\begin{eqnarray} \label {eq:3.4}
 \mathcal {K}_m'' (\varphi_m)  ( Q_n \varphi_m - \varphi_m)^2  
   & = & {V_1} ( \varphi ) h^{ 2 r }  + O \left ( \max \left \{\tilde{h}^d,  h^{2 r + 2} \right \} \right ),
\end{eqnarray}
\begin{eqnarray} \label {eq:3.5}
 \mathcal {K}_m^{(3)}(\varphi_m)  ( Q_n \varphi_m - \varphi_m)^3  
   & = & {V_2} ( \varphi ) h^{ 3 r }  + O \left ( \max \left \{\tilde{h}^d,  h^{3 r + 1} \right \} \right ).
\end{eqnarray}
\end{proposition}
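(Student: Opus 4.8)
The plan is to deduce (\ref{eq:3.3})--(\ref{eq:3.5}) from the continuous expansions already available at $\psi=\varphi$, namely (\ref{eq:3.1}) and the expansions quoted above for $\mathcal{K}''(\varphi)(Q_n\psi-\psi)^2$ and $\mathcal{K}^{(3)}(\varphi)(Q_n\psi-\psi)^3$, by a telescoping that replaces the base point $\varphi$ by $\varphi_m$ and the operator $\mathcal{K}$ by $\mathcal{K}_m$ while bounding every resulting discrepancy by $O(\tilde h^{d})$. Three ingredients carry this out: (i) the operator-norm bound (\ref{eq:A1}) together with its analogues $\|\mathcal{K}_m^{(k)}(\varphi)-\mathcal{K}_m^{(k)}(\varphi_m)\|=O(\tilde h^{d})$ for $k=2,3$, which follow from the same Mean-Value-Theorem argument since $C_5$ already controls $D^{(0,0,k)}\kappa$ for $k\le 4$ and $\|\varphi-\varphi_m\|_\infty=O(\tilde h^{d})$ by (\ref{eq:2.8}); (ii) the quadrature-error bound (\ref{eq:2.9}) and its multilinear versions, discussed below; and (iii) the approximation estimates $\|\varphi-\varphi_m\|_\infty=O(\tilde h^{d})$ of (\ref{eq:2.8}) and $\|\varphi_m-Q_n\varphi_m\|_\infty=O(h^{r})$ of (\ref{eq:2.15}). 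Throughout one uses $m=pn$: then $\Delta_m$ refines $\Delta_n$ and no partition point of $\Delta_n$ lies in the interior of a subinterval of $\Delta_m$, so $Q_n\varphi$ and its powers lie in $C^{d}_{\Delta_m}[0,1]$, and $\|Q_n\varphi-\varphi\|_{d,\infty}=O(1)$ because on each subinterval the interpolation error has derivatives of order $j<r$ of size $O(h^{r-j})$ and derivatives of order $j\ge r$ equal to $-\varphi^{(j)}$.

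For (\ref{eq:3.3}) I would write, with $w=Q_n\varphi-\varphi$ and $w_m=Q_n\varphi_m-\varphi_m$,
\[
\mathcal{K}_m'(\varphi_m)w_m-\mathcal{K}'(\varphi)w=\bigl(\mathcal{K}_m'(\varphi_m)-\mathcal{K}_m'(\varphi)\bigr)w_m+\mathcal{K}_m'(\varphi)(w_m-w)+\bigl(\mathcal{K}_m'(\varphi)-\mathcal{K}'(\varphi)\bigr)w .
\]
The first term is $O(\tilde h^{d}h^{r})$ by (\ref{eq:A1}) and (\ref{eq:2.15}); the second, since $w_m-w=(Q_n-I)(\varphi_m-\varphi)$, is $O(\tilde h^{d})$ by (\ref{eq:2.10}), (\ref{eq:2.8}) and the uniform boundedness of $\mathcal{K}_m'(\varphi)$ (the weights have bounded absolute sum and $\partial\kappa/\partial u$ is bounded); the third is $O(\tilde h^{d})$ by (\ref{eq:2.9}) with $v=w$, since $w\in C^{d}_{\Delta_m}[0,1]$ and $\|w\|_{d,\infty}=O(1)$. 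Hence $\mathcal{K}_m'(\varphi_m)w_m=\mathcal{K}'(\varphi)w+O(\tilde h^{d})$, and (\ref{eq:3.1}) at $\psi=\varphi$ gives (\ref{eq:3.3}).

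For (\ref{eq:3.4}) and (\ref{eq:3.5}) the same three-term telescoping is applied to $\mathcal{K}_m^{(k)}(\varphi_m)(Q_n\varphi_m-\varphi_m)^k-\mathcal{K}^{(k)}(\varphi)(Q_n\varphi-\varphi)^k$, $k=2,3$: the base-point term $\bigl(\mathcal{K}_m^{(k)}(\varphi_m)-\mathcal{K}_m^{(k)}(\varphi)\bigr)(Q_n\varphi_m-\varphi_m)^k$ is $O(\tilde h^{d}h^{kr})$ by the $k$-linear analogue of (\ref{eq:A1}) and (\ref{eq:2.15}); the residual-difference term $\mathcal{K}_m^{(k)}(\varphi)\bigl[(Q_n\varphi_m-\varphi_m)^k-(Q_n\varphi-\varphi)^k\bigr]$ is, by multilinearity, a sum of $k$ contributions each carrying a single factor $w_m-w=O(\tilde h^{d})$ and $k-1$ factors of size $O(h^{r})$, hence $O(\tilde h^{d}h^{(k-1)r})$; and substituting the continuous expansion for $\mathcal{K}^{(k)}(\varphi)(Q_n\varphi-\varphi)^k$ then yields (\ref{eq:3.4})--(\ref{eq:3.5}). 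The one genuinely new point is the operator-difference term $\bigl(\mathcal{K}_m^{(k)}(\varphi)-\mathcal{K}^{(k)}(\varphi)\bigr)(Q_n\varphi-\varphi)^k$: this is the quadrature error of $t\mapsto D^{(0,0,k)}\kappa(s,t,\varphi(t))\,\bigl((Q_n\varphi-\varphi)(t)\bigr)^k$, and (\ref{eq:2.9}) cannot be invoked verbatim because $D^{(0,0,k)}\kappa(s,\cdot,\varphi(\cdot))$ has only $\max\{d,2r+2\}-k+1$ continuous $t$-derivatives, one or two fewer than $d$ in the regime $d\ge 2r+2$. I would circumvent this by exploiting the factorization of the interpolation residual on each subinterval $[t_{j-1},t_j]$: vanishing at the $r$ Gauss nodes $\tau_1^j,\dots,\tau_r^j$, it equals $\pm\,h^{r}\,\Psi\bigl((\cdot-t_{j-1})/h\bigr)$ times the divided difference $[\varphi;\tau_1^j,\dots,\tau_r^j,\cdot]$, which belongs to $C^{d-r}$, so that $(Q_n\varphi-\varphi)^k=\pm\,h^{kr}\,\Psi\bigl((\cdot-t_{j-1})/h\bigr)^k$ times a $C^{d-r}$ factor on that subinterval. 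One then Taylor-expands the product of $D^{(0,0,k)}\kappa(s,\cdot,\varphi(\cdot))$ with that factor to the highest affordable degree: the polynomial part times $\Psi\bigl((\cdot-t_{j-1})/h\bigr)^k$ is nearly annihilated by the composite rule of degree of precision $d-1$ (the residual Peano-kernel error, multiplied by the $h^{kr}$ prefactor, which absorbs the at most $h^{-kr}$ produced by differentiating $\Psi(\cdot/h)^k$, is $O(\tilde h^{d})$), while the Taylor remainder, also multiplied by $h^{kr}$, is $O(\tilde h^{d})$ as well since $\tilde h\le h$, $r\ge1$, $k\ge1$; thus the operator-difference term is $O(\tilde h^{d})$ and the argument closes. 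This factorization-plus-Taylor estimate, together with the bookkeeping of which smoothness hypothesis supplies which derivative, is the step I expect to demand the most care.
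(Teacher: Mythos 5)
For (\ref{eq:3.3}) your argument is the paper's proof in all essentials: the same three replacements ($\mathcal{K}'(\varphi)\to\mathcal{K}_m'(\varphi)$ via (\ref{eq:2.9}) together with $\|Q_n\varphi-\varphi\|_{d,\infty}=O(1)$, the argument $\varphi\to\varphi_m$ inside $Q_n-I$ via (\ref{eq:2.8}) and (\ref{eq:2.10}), and the base point $\varphi\to\varphi_m$ via (\ref{eq:A1}) and (\ref{eq:2.15})), merely packaged as one three-term telescoping identity instead of three successive steps, after which (\ref{eq:3.1}) is invoked. Where you genuinely add something is (\ref{eq:3.4})--(\ref{eq:3.5}), which the paper dismisses with ``the proofs are similar'': you correctly observe that the literal analogue of (\ref{eq:2.9}) for $\partial^k\kappa/\partial u^k$ is not available at full order $d$, since the hypothesis only gives $\partial^k\kappa/\partial u^k\in C^{\max\{d,2r+2\}-k+1}$, and a naive one-derivative loss yields only $O(\tilde h^{\,d-1})$, which does not fit inside $O\left(\max\{\tilde h^{d},h^{2r+2}\}\right)$ precisely in the borderline case $d=2r+2$ (e.g.\ $p=1$). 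Your repair --- factor the residual on each coarse subinterval as $h^{r}\Psi\left((\cdot-t_{j-1})/h\right)$ times a $C^{d-r}$ divided-difference factor, split the smooth-but-not-$C^{d}$ factor into a low-degree Taylor polynomial plus remainder, estimate the polynomial part with the order-$d$ Peano bound (the prefactor $h^{kr}$ absorbs the at most $\min\{d,kr\}$ differentiations that can fall on $\Psi(\cdot/h)^{k}$), and bound the remainder part crudely by its sup norm --- is sound and closes exactly this gap. One bookkeeping slip: the Taylor-remainder contribution is only $O\left(h^{kr+D+1}\right)$, not $O(\tilde h^{d})$ in general (take $\tilde h=h^{2}$, as in the paper's $r=2$ experiments); with $D\ge 1$ for $k=2$ and $D\ge 0$ for $k=3$ it is $O\left(h^{2r+2}\right)$, respectively $O\left(h^{3r+1}\right)$, which is precisely what (\ref{eq:3.4})--(\ref{eq:3.5}) require, so your conclusion stands, but the estimate should be stated that way rather than claiming $O(\tilde h^{d})$ for both pieces.
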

\begin{proof}
We write
\begin{eqnarray*}
\mathcal {K}_m'  (\varphi) ( Q_n \varphi - \varphi)  = \mathcal {K}'  (\varphi) ( Q_n \varphi - \varphi) +
(\mathcal {K}_m'  (\varphi) - \mathcal {K}'  (\varphi)) ( Q_n \varphi - \varphi).
\end{eqnarray*}
Since $d \geq 2 r$ and $ \tilde {h} \leq h,$ from the estimate (\ref{eq:2.9}) we obtain,
$$ \|(\mathcal {K}_m'  (\varphi) - \mathcal {K}'  (\varphi)) ( Q_n \varphi - \varphi) \|_\infty 
\leq C_2 \left \| Q_n \varphi - \varphi \right \|_{d, \infty} \tilde{h}^d = C_2 \left \|  \varphi \right \|_{d, \infty} \tilde{h}^d.$$
Since $\varphi \in C^{ 2 r + 2} [0, 1],$ 
 it  follows from (\ref {eq:3.1}) that
\begin{equation*} 
 \mathcal {K}_m' (\varphi)  ( Q_n \varphi - \varphi)   =  
 {T} ( \varphi ) h^{ 2 r } + O (h^{2 r + 2}).
\end{equation*}
Since by (\ref{eq:2.8}),
$\|\varphi - \varphi_m \|_\infty 
= O (\tilde{h}^d),$
we obtain
\begin{eqnarray*}
 \mathcal {K}_m' (\varphi)  ( Q_n \varphi_m - \varphi_m)  &=&
  \mathcal {K}_m' (\varphi)  ( Q_n \varphi - \varphi)  + \mathcal {K}_m' (\varphi) (Q_n - I) (\varphi_m - \varphi)\\
  & = & {T} ( \varphi ) h^{ 2 r } + O \left ( \max \left \{\tilde{h}^d,  h^{2 r + 2} \right \} \right ).
  \end{eqnarray*}
  Using the estimate (\ref{eq:A1}), we thus obtain
\begin{eqnarray*}
 \mathcal {K}_m' (\varphi_m)  ( Q_n \varphi_m - \varphi_m)  &=&  \mathcal {K}_m' (\varphi)  ( Q_n \varphi_m - \varphi_m) + [\mathcal {K}_m'  (\varphi_m) - \mathcal {K}_m'  (\varphi)] ( Q_n \varphi_m - \varphi_m )\\
   & = & {T} ( \varphi ) h^{ 2 r } + O \left ( \max \left \{\tilde{h}^d,  h^{2 r + 2} \right \} \right ).
\end{eqnarray*}
This completes the proof of (\ref {eq:3.3}). The proofs of (\ref {eq:3.4}) and (\ref {eq:3.5}) are similar.
\end{proof}
\setcounter{equation}{0}
\section{Discrete Iterated Modified Projection method }

In this section we prove our main result about the asymptotic series expansion for
the discrete iterated modified projection solution $\tilde {z}_n^M.$

Using the generalised Taylor series expansion, we obtain 
\begin{eqnarray*}\nonumber
\tilde{z}_n^M - \varphi_m &= &  \mathcal {K}_m (z_n^M) - \mathcal {K}_m (\varphi_m)
= \mathcal {K}_m' (\varphi_m) (z_n^M - \varphi_m) + R ({z}_n^M - \varphi_m),
\end{eqnarray*}
where
 \begin{eqnarray}\nonumber
&&R  ({z}_n^M - \varphi_m)  (s)
 = \int_0^1  {(1 - \theta) }   \mathcal {K}_m^{''} \left (\varphi_m + \theta  
 ({z}_n^M - \varphi_m) \right ) ({z}_n^M - \varphi_m)^2  (s)    d \theta.
\end{eqnarray}
It follows that
\begin{eqnarray*}
\|R  ({z}_n^M - \varphi_m) \|_\infty \leq \frac {C_{5}} {2}  \left ( \sum_{i=1}^\rho | \omega_i | \right ) \|z_n^M - \varphi_m \|_\infty^2. 
\end{eqnarray*}
If $\displaystyle {d \geq 2 r, \; \kappa \in C^d (\Omega), \; \frac {\partial \kappa } {\partial u} \in C^{2 r } (\Omega ) \mbox { and } \;  f \in C^{d}
 ( [0, 1]),}$ then
using (\ref {eq:1.7}) we deduce that
\begin{eqnarray}\label {eq:4.1}
\tilde{z}_n^M - \varphi_m 
& = & \mathcal {K}_m' (\varphi_m) (z_n^M - \varphi_m)  + O \left (\max \{\tilde{h}^{d}, h^{3 r} \}^2 \right ).
\end{eqnarray}
Our aim is to obtain an asymptotic series expansion for the first term on the right hand side of the above equation.

We first prove the following  preliminary results which are needed later on.


\begin{lemma}\label{lemma:4.1}
If $\displaystyle {\frac {\partial \kappa} {\partial u}  \in C^{ 3 r + 3} (\Omega)},$ 
    then for $x \in B (\varphi, \delta)$  and for $1 \leq k \leq 4,$
  \begin{eqnarray} \label {eq:4.2}
&&\left \|  (I - Q_n) \mathcal {K}_m^{(k)} (x)  \right \| 
   = O \left ( h^{ r} \right),
\end{eqnarray}
\begin{eqnarray} \label {eq:4.3}
&& \left \| \mathcal {K}_m'  (\varphi_m) (I - Q_n) \mathcal {K}_m^{(k)} (x)  \right \| 
   = O \left ( h^{2 r} \right)
\end{eqnarray}
and
\begin{eqnarray} \label {eq:4.4}
&&\left \|  \mathcal {K}_m'  (\varphi_m) (I - Q_n )  \mathcal {K}_m'  (\varphi_m) (I - Q_n)  \mathcal {K}_m'  (x)  \right \|
= O (h^{4 r}).
\end{eqnarray}
\end{lemma}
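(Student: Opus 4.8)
The plan is to prove the three estimates \eqref{eq:4.2}, \eqref{eq:4.3}, \eqref{eq:4.4} in sequence, each one building on the previous, by exploiting the fact that the range of $\mathcal{K}_m^{(k)}(x)$ and of $\mathcal{K}_m'(\varphi_m)$ consists of smooth functions of $s$, so that applying $(I - Q_n)$ gains a factor $h^r$ each time via the interpolation estimate \eqref{eq:2.14}. The key observation is that for any $x \in B(\varphi,\delta)$, the function $s \mapsto \mathcal{K}_m^{(k)}(x)(v_1,\ldots,v_k)(s) = \tilde h \sum_j \sum_i \omega_i D^{(0,0,k)}\kappa(s,\zeta_i^j, x(\zeta_i^j)) v_1(\zeta_i^j)\cdots v_k(\zeta_i^j)$ is a finite linear combination (with coefficients bounded independently of $m$, since $\tilde h \sum_j\sum_i |\omega_i| = \sum_i|\omega_i|$) of the smooth functions $s \mapsto D^{(0,0,k)}\kappa(s,\zeta_i^j,\cdot)$. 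Because $\frac{\partial\kappa}{\partial u} \in C^{3r+3}(\Omega)$ (hence in particular $C^{r}$ in $s$), each such function has $s$-derivatives up to order $r$ bounded uniformly, and consequently
\begin{eqnarray*}
\left\| (I - Q_n) \mathcal{K}_m^{(k)}(x)(v_1,\ldots,v_k) \right\|_\infty \leq C_4 \, h^r \max_{s}\left| \partial_s^r \mathcal{K}_m^{(k)}(x)(v_1,\ldots,v_k)(s)\right| \leq C\, h^r \|v_1\|_\infty \cdots \|v_k\|_\infty,
\end{eqnarray*}
with $C$ independent of $n$. Taking the supremum over $\|v_\ell\|_\infty \leq 1$ gives \eqref{eq:4.2}. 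The same argument shows that $(I-Q_n)\mathcal{K}_m'(x)$ and $(I-Q_n)\mathcal{K}_m'(\varphi_m)$ are $O(h^r)$ in operator norm; we will need all of these.

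For \eqref{eq:4.3}, I would compose: $\mathcal{K}_m'(\varphi_m)(I - Q_n)\mathcal{K}_m^{(k)}(x)$ maps an input tuple to $\mathcal{K}_m'(\varphi_m)\big[ (I-Q_n)\mathcal{K}_m^{(k)}(x)(\cdots)\big]$. By \eqref{eq:4.2} the bracketed quantity has norm $O(h^r)$, and $\mathcal{K}_m'(\varphi_m)$ is a bounded operator with norm bounded uniformly in $m$ (its norm is at most $(\sum_i|\omega_i|)\,C_5$), which already gives $O(h^r)$ — but we want $O(h^{2r})$. The extra factor comes from observing that the function $w := (I-Q_n)\mathcal{K}_m^{(k)}(x)(\cdots)$, although small, is itself not smooth; however $\mathcal{K}_m'(\varphi_m) w$ \emph{is} smooth in $s$ with all $s$-derivatives controlled by $\|w\|_\infty$, since $\mathcal{K}_m'(\varphi_m) w(s) = \tilde h\sum_j\sum_i \omega_i \frac{\partial\kappa}{\partial u}(s,\zeta_i^j,\varphi_m(\zeta_i^j)) w(\zeta_i^j)$ and differentiation in $s$ only hits the smooth kernel. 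The right way to extract the gain is to write $\mathcal{K}_m'(\varphi_m) = Q_n \mathcal{K}_m'(\varphi_m) + (I - Q_n)\mathcal{K}_m'(\varphi_m)$. The first piece has range in $\mathcal{X}_n$ and norm $O(1)$, so composed with $w$ of norm $O(h^r)$ it would only give $O(h^r)$ — so instead I use the smoothness of the output directly: $\mathcal{K}_m'(\varphi_m) w$ is $C^r$ in $s$ with $\|\partial_s^r [\mathcal{K}_m'(\varphi_m) w]\|_\infty \leq C\|w\|_\infty = O(h^r)$, and its leading term structure is irrelevant; what matters is simply that $\mathcal{K}_m'(\varphi_m)$ produces a smooth function whose size is $O(h^r)$. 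Actually the cleanest path: apply \eqref{eq:4.2} to $\mathcal{K}_m'$ to get $\|(I-Q_n)\mathcal{K}_m'(\varphi_m)\| = O(h^r)$, but \eqref{eq:4.3} as stated has the $(I-Q_n)$ on the \emph{inside}, so I instead bound $\|\mathcal{K}_m'(\varphi_m) (I-Q_n)\mathcal{K}_m^{(k)}(x)\| \leq \|\mathcal{K}_m'(\varphi_m)\| \cdot \|(I-Q_n)\mathcal{K}_m^{(k)}(x)\|$ and sharpen the first factor: when acting on a function $w$ in the range of $(I - Q_n)$, i.e. a function vanishing at all $\tau_i^k$, we have $Q_n(\text{something involving }w)$... this is where care is needed. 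The honest resolution is: $\mathcal{K}_m'(\varphi_m)w$ has norm $\le \|\mathcal{K}_m'(\varphi_m)\|\,\|w\|_\infty = O(h^r)$ \emph{and} is $C^r$ in $s$ with $C^r$-norm $O(h^r)$; applying $(I-Q_n)$ to it would give $O(h^{2r})$, but that is \eqref{eq:4.4}-type, not \eqref{eq:4.3}. For \eqref{eq:4.3} the gain must come from $w$ being in $\mathrm{range}(I-Q_n)$: one shows $\|\mathcal{K}_m'(\varphi_m) w\|_\infty = O(h^r \|w\|_\infty)$ for such $w$ by the standard argument that $\mathcal{K}_m'(\varphi_m)$ applied to a function orthogonal-in-interpolation to $\mathcal{X}_n$ loses a power of $h$ — concretely, on each subinterval $[\tau^k]$, $w$ restricted there has an expansion whose quadrature against the smooth kernel telescopes, contributing $O(h^r)$ beyond $\|w\|$. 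I would cite or adapt the analogous estimate from Kulkarni–Nidhin \cite{Kul3} / Kulkarni–Rakshit \cite{Kul1}. Combining, $\|\mathcal{K}_m'(\varphi_m)(I-Q_n)\mathcal{K}_m^{(k)}(x)\| = O(h^r)\cdot O(h^r) = O(h^{2r})$.

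For \eqref{eq:4.4}, I iterate once more. Write the operator as $\mathcal{K}_m'(\varphi_m)(I-Q_n)\,\big[\mathcal{K}_m'(\varphi_m)(I-Q_n)\mathcal{K}_m'(x)\big]$. The inner bracket is exactly of the form treated in \eqref{eq:4.3} with $k=1$, so it has operator norm $O(h^{2r})$; moreover its range consists of smooth functions of $s$ — indeed the outermost $\mathcal{K}_m'(\varphi_m)$ in the bracket guarantees $C^r$-regularity in $s$ with $C^r$-norm $O(h^{2r})$. Then applying $(I-Q_n)$ gains another $h^r$ by \eqref{eq:2.14}, giving $O(h^{3r})$, and the final outer $\mathcal{K}_m'(\varphi_m)$ acts on a function in $\mathrm{range}(I-Q_n)$ of size $O(h^{3r})$, gaining a further $h^r$ exactly as in the proof of \eqref{eq:4.3}, for a total of $O(h^{4r})$. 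The main obstacle — and the step I would spend the most care on — is justifying the ``extra $h^r$'' whenever $\mathcal{K}_m'(\varphi_m)$ (or any $\mathcal{K}_m^{(k)}$) is applied to a function lying in the range of $I - Q_n$: this is the discrete analogue of the superconvergence-at-Gauss-points phenomenon, and because no Euler–MacLaurin formula is available for the Nyström quadrature (as the authors emphasise in the introduction), one must argue directly that the composite quadrature of (smooth kernel)$\times$(function vanishing at the $r$ Gauss nodes on each subinterval) is $O(h^r)$ smaller than the naive bound. I expect this to require a subinterval-by-subinterval estimate using the degree of precision $d-1 \ge 2r-1$ of the fine quadrature together with the $r$-fold vanishing, and it is the technical heart of the lemma.
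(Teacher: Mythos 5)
Your proof of \eqref{eq:4.2} is essentially the paper's: bound the $s$-derivatives of the quadrature sum uniformly and apply \eqref{eq:2.14}. (One small adjustment: you only control derivatives up to order $r$, whereas for the later estimates you will need the output of $\mathcal {K}_m^{(k)}(x)$ to be bounded in $C^{2r}$, which holds by the same computation and is what the paper records.) The gap is in \eqref{eq:4.3} and \eqref{eq:4.4}. The mechanism you propose for the extra factor of $h^{r}$ --- that $\mathcal {K}_m'(\varphi_m)$ applied to \emph{any} $w$ in the range of $I-Q_n$ satisfies $\|\mathcal {K}_m'(\varphi_m)w\|_\infty = O(h^{r}\|w\|_\infty)$ --- is false as stated. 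Take $w\geq 0$ continuous with $\|w\|_\infty=1$, equal to $1$ except in tiny neighbourhoods of the collocation nodes $\tau_i^k$ and vanishing exactly there; then $Q_nw=0$, so $w\in\mathrm{range}(I-Q_n)$, yet the quadrature sum $\tilde h\sum_{j}\sum_i\omega_i\,\ell_m(s,\zeta_i^j)w(\zeta_i^j)$ is generically $O(1)$, not $O(h^{r})$. The superconvergence gain is not an operator-norm statement on $\mathrm{range}(I-Q_n)$ measured by $\|w\|_\infty$; it is measured by the smoothness of the pre-image. The estimate actually needed (and the one the paper quotes from Kulkarni--Rakshit \cite{Kul1}) is
\begin{equation*}
\|\mathcal {K}_m'(\varphi_m)(I-Q_n)v\|_\infty \leq C_6\,\|\ell_m\|_{r,\infty}\,\|v\|_{2r,\infty}\,h^{2r}
\qquad (v\in C^{2r}[0,1]),
\end{equation*}
together with its iterated version \eqref{eq:4.6}, which gives $h^{4r}$ against $\|u\|_{2r,\infty}$ and requires $\|\ell_m\|_{3r,\infty}$ (this is where the hypothesis $\partial\kappa/\partial u\in C^{3r+3}(\Omega)$ enters). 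The paper then simply applies these with $v=\mathcal {K}_m^{(k)}(x)(v_1,\ldots,v_k)$ and $u=\mathcal {K}_m'(x)v$, whose $C^{2r}$-norms in $s$ are bounded by $C_5(\sum_i|\omega_i|)\|v_1\|_\infty\cdots\|v_k\|_\infty$, yielding \eqref{eq:4.3} and \eqref{eq:4.4} in one step each.

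Your iterative bookkeeping for \eqref{eq:4.4} ($h^{2r}$ for the inner bracket, then $h^{r}$ from $(I-Q_n)$, then a further $h^{r}$ from the outer $\mathcal {K}_m'(\varphi_m)$) is built on the same unjustified ``gain $h^{r}$ against the sup norm on $\mathrm{range}(I-Q_n)$'' step, and the intermediate claim that the bracket has $C^{r}$-norm $O(h^{2r})$ is itself not automatic: the naive bound on the $s$-derivatives of $\mathcal {K}_m'(\varphi_m)w$ is $O(\|w\|_\infty)=O(h^{r})$, and upgrading it to $O(h^{2r})$ requires re-running the superconvergence estimate with the $s$-differentiated kernel, i.e.\ exactly the content of the quoted iterated bound \eqref{eq:4.6}. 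You do flag this as ``the technical heart'' and propose to cite or adapt \cite{Kul1}, which is the right instinct, but the statement you would need to cite is the $\|v\|_{2r,\infty}h^{2r}$ (and $\|u\|_{2r,\infty}h^{4r}$) form, not the $\|w\|_\infty h^{r}$ form you formulate; as written, the proposal neither proves nor correctly identifies the key estimates, so the proof of \eqref{eq:4.3} and \eqref{eq:4.4} is incomplete.
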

\begin{proof}
Note that  for $1 \leq k \leq 4$ and for $v_1, \ldots, v_k \in C ([0, 1]),$
\begin{eqnarray*}
\mathcal {K}_m^{(k)} (x) (v_1, \ldots, v_k )  (s) &=& \tilde {h}  \sum_{j=1}^m  \sum_{i=1}^\rho  \omega_i \;  \frac {\partial^{k} \kappa } { \partial u^k} \left(s,  \zeta_i^j, x (\zeta_i^j)\right) v_1 (\zeta_i^j)  \cdots v_k (\zeta_i^j)
\end{eqnarray*}
and for $p = 1, \ldots, 2 r,$
\begin{eqnarray*}
 \left (\mathcal {K}_m^{(k)} (x) (v_1, \ldots, v_k ) \right)^{(p)} (s) &=& \tilde {h}  \sum_{j=1}^m  \sum_{i=1}^\rho  \omega_i \;  \frac {\partial^{p+k} \kappa } {\partial s^p \partial u^k} \left(s,  \zeta_i^j, x (\zeta_i^j)\right) v_1 (\zeta_i^j)  \cdots v_k (\zeta_i^j).
\end{eqnarray*}
Hence, for $x \in B (\varphi, \delta),$
\begin{eqnarray*}
 \left \|\left (\mathcal {K}_m^{(k)} (x) (v_1, \ldots, v_k ) \right)^{(p)} \right \|_\infty 
  & \leq & C_5 \left (  \sum_{i=1}^\rho  |\omega_i| \right )  \|v_1\|_\infty \ldots \|v_k\|_\infty.
\end{eqnarray*}
From the estimate (\ref{eq:2.14}),
\begin{eqnarray*}
\|(I - Q_n) \mathcal {K}_m^{(k)} (x) (v_1, \ldots, v_k ) \|_\infty &\leq& C_4  \left \|\left (\mathcal {K}_m^{(k)} (x) (v_1, \ldots, v_k ) \right)^{(r)} \right \|_\infty h^r\\
&\leq & C_4 C_5 \left (  \sum_{i=1}^\rho  |\omega_i| \right )  \|v_1\|_\infty \ldots \|v_k\|_\infty h^r.
\end{eqnarray*}
By taking the supremum over the set $\{ (v_1, \ldots, v_k): \|v_1\|_\infty \leq 1, \ldots, \|v_k\|_\infty \leq 1\},$ we obtain (\ref{eq:4.2}).

We recall the following result from Kulkarni-Rakshit \cite{Kul1}: If $v \in C^{2 r } [0, 1],$ then
\begin{eqnarray} \nonumber
\| \mathcal {K}_m'  (\varphi_m) (I - Q_n) v \|_\infty 
& \leq & C_6  \|\ell_m \|_{r, \infty} \|v \|_{2 r, \infty } \; h^{2 r},
\end{eqnarray}
where
$$\displaystyle {\ell_m (s, t) = \frac {\partial \kappa} {\partial u} (s, t, \varphi_m (t)), \; s, t \in [0, 1],
\;\;\; \|\ell_m \|_{r, \infty} = \max_{0 \leq i + j \leq r} \left \|D^{(i, j)} \ell_m \right \|_\infty }$$
and
$\displaystyle {C_6 =  \frac {1} {r!} 2^r   \| \Psi \|_\infty \left (\sum_{i=1}^\rho |\omega_i| \right )}$
is a constant independent of $n.$ 
Since $ \left \| \ell_m \right \|_{r, \infty} \leq C_5,$ it follows that 
\begin{eqnarray*}
\| \mathcal {K}_m'  (\varphi_m) (I - Q_n) \mathcal {K}_m^{(k)} (x) (v_1, \ldots, v_k ) \|_\infty 
&\leq &  (C_5)^{2}  C_6 \left (  \sum_{i=1}^\rho  |\omega_i| \right )    \|v_1\|_\infty \ldots \|v_k\|_\infty \; h^{2 r}.
\end{eqnarray*}
By taking the supremum over the set $\{ (v_1, \ldots, v_k): \|v_1\|_\infty \leq 1, \ldots, \|v_k\|_\infty \leq 1\},$ we obtain 
(\ref{eq:4.3}).

In order to prove (\ref{eq:4.4}),
we recall the following estimate from Kulkarni-Rakshit \cite{Kul1}:
If $u \in C^{2 r} [0, 1],$ then
\begin{eqnarray} \label {eq:4.6}
\|  \mathcal {K}_m'  (\varphi_m) (I - Q_n )  \mathcal {K}_m'  (\varphi_m) (I - Q_n) u \|_\infty \leq  ( C_6)^2 \; \| \ell_m \|_{ r, \infty}  \|\ell_m \|_{3 r, \infty}  \|u\|_{2 r, \infty} h^{4 r}.
\end{eqnarray}
Then for $v \in C [0, 1],$
\begin{eqnarray*}
&&\|  \mathcal {K}_m'  (\varphi_m) (I - Q_n )  \mathcal {K}_m'  (\varphi_m) (I - Q_n)  \mathcal {K}_m'  (x) v \|_\infty \\
& & \hspace*{2 cm} \leq
( C_6)^2 \; \| \ell_m \|_{ r, \infty}  \|\ell_m \|_{3 r, \infty}  \| \mathcal {K}_m'  (x) v\|_{2 r, \infty} h^{4 r}
\\
& & \hspace*{2 cm} \leq   (C_5)^2 (C_6)^2 \left (  \sum_{i=1}^\rho  |\omega_i| \right )   \;  \|\ell_m \|_{3 r, \infty}  \|v\|_\infty h^{4 r}.
\end{eqnarray*}
The estimate  (\ref{eq:4.4}) then follows by taking the supremum over the set $\{v \in C[0, 1]: \|v\|_\infty \leq 1 \}.$

\end{proof}


From Proposition 4.2 of Kulkarni-Rakshit \cite{Kul1} we recall that for all $m$ big enough, \\$I - {\mathcal{K}}_m'   (\varphi_m)$ is invertible 
and that $\displaystyle {\left \|\left (I - {\mathcal{K}}_m'   (\varphi_m)  \right)^{-1} \right \| \leq C_7,}$
where $C_7$ is a constant independent of $m.$
It can be easily checked that 
\begin{eqnarray*}
z_n^M - \varphi_m 
& = & - \left [ I - {\mathcal{K}}_m'   (\varphi_m) \right]^{-1}  \left \{  \mathcal {K}_m (\varphi_m) -  
\mathcal{K}_m'   (\varphi_m) \varphi_m - \tilde{\mathcal{K}}_n^M  ( z_n^M) +   \mathcal{K}_m'   (\varphi_m) z_n^M \right \}.
\end{eqnarray*}
Let
$ L_m =  \left ( I - {\mathcal{K}}_m'   (\varphi_m) \right)^{-1}.$
We write
\begin{eqnarray}\nonumber
&&\mathcal{K}_m'   (\varphi_m) ( z_n^M - \varphi_m ) \\\nonumber
&  &  \hspace*{1.5 cm} =
 - L_m  \mathcal{K}_m'   (\varphi_m) \left \{  \mathcal {K}_m (\varphi_m) -  \tilde{\mathcal{K}}_n^M (\varphi_m) \right \}\\\nonumber
&  &  \hspace*{1.5 cm} \;\;+ L_m \mathcal{K}_m'   (\varphi_m) \left \{
\tilde{\mathcal{K}}_n^M (z_n^M) - \tilde{\mathcal{K}}_n^M (\varphi_m)  - \left (  \tilde{\mathcal{K}}_n^M \right )' (\varphi_m) 
(z_n^M - \varphi_m) \right \}\\  \label {eq:4.7}
&  &  \hspace*{1.5 cm}  \;\;+ L_m \mathcal{K}_m'   (\varphi_m) \left \{
\left (\left (  \tilde{\mathcal{K}}_n^M \right )' (\varphi_m)  -  \mathcal{K}_m'   (\varphi_m) \right  ) (z_n^M - \varphi_m) \right \}.
\end{eqnarray}
 We obtain an asymptotic expansion for the first term on the right hand side of 
(\ref{eq:4.7}) and show that the second and the third terms are of the order of $\displaystyle {h^{2 r} \max \left \{ \tilde{h}^d, h^{2 r + 2} \right \} }.$
The following two lemmas are needed to obtain the results for the first term.

From now onwards we assume that 
$$d \geq 2 r, \; \frac {\partial \kappa} {\partial u}  \in C^{ \max \{d, 3 r + 3 \}} (\Omega)
\;\;\; \mbox{and} \;\;\; f \in C^{\max \{d, 2 r + 2 \}} [0, 1].$$

\begin{lemma}\label{lemma:4.2}
If  $d \geq 2 r, \; \frac {\partial \kappa} {\partial u}  \in C^{ \max \{d, 3 r + 3 \}} (\Omega)
\;\;\; \mbox{and} \;\;\; f \in C^{\max \{d, 2 r + 2 \}} [0, 1],$ then
\begin{eqnarray} \nonumber
&& L_m  \mathcal{K}_m'   (\varphi_m) (I - Q_n) \mathcal {K}_m'  (\varphi_m) ( Q_n - I) \varphi_m = 
 U (T (\varphi)) h^{4 r} + O \left ( h^{2 r} \max \left \{ \tilde{h}^d, h^{2 r + 2} \right \} \right )\\\label {eq:4.8}
\end{eqnarray}
and
\begin{eqnarray}\nonumber
&&L_m  \mathcal{K}_m'  (\varphi_m) (I - Q_n) \mathcal {K}_m'' (\varphi_m) ( ( Q_n - I) \varphi_m )^2 = 
 U (V_1 (\varphi)) h^{4 r} +O \left ( h^{2 r} \max \left \{ \tilde{h}^d, h^{2 r + 2} \right \} \right ).\\ \label {eq:4.9}
\end{eqnarray}
\end{lemma}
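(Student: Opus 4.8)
The plan is to strip the fully discrete product on the left of (\ref{eq:4.8}) down to its continuous counterpart plus errors of size $h^{2r}\max\{\tilde h^d,h^{2r+2}\}$, and then to read off the leading term from (\ref{eq:3.1})--(\ref{eq:3.2}). Write $E=\max\{\tilde h^d,h^{2r+2}\}$ and $M_m=L_m\mathcal{K}_m'(\varphi_m)=L_m-I$, which is uniformly bounded in $m$. The tools are: (i) the estimate $\|\mathcal{K}_m'(\varphi_m)(I-Q_n)v\|_\infty\le C\,\|v\|_{2r,\infty}\,h^{2r}$ for $v\in C^{2r}[0,1]$ recalled in the proof of Lemma \ref{lemma:4.1}, together with its operator form (\ref{eq:4.3}), which makes $M_m(I-Q_n)\mathcal{K}_m^{(k)}(x)$ an operator of norm $O(h^{2r})$; (ii) the estimates (\ref{eq:A1}), (\ref{eq:2.8}), (\ref{eq:2.9}) controlling ``discrete minus continuous''; and (iii) the algebraic identity $(I-\mathcal{K}'(\varphi))^{-1}T(\psi)=U(\psi)$, which one checks directly from the defining formulas for $T$, $U$ and the resolvent of $\mathcal{K}'(\varphi)$ and which lets one pass from the leading coefficient of (\ref{eq:3.1}) to that of (\ref{eq:3.2}).

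First I would apply Proposition \ref{prop:3.1}, equation (\ref{eq:3.3}), to the inner factor: $\mathcal{K}_m'(\varphi_m)(Q_n-I)\varphi_m=\mathcal{K}_m'(\varphi_m)(Q_n\varphi_m-\varphi_m)=T(\varphi)\,h^{2r}+\rho_0$ with $\|\rho_0\|_\infty=O(E)$, so that the left side of (\ref{eq:4.8}) equals
\[
h^{2r}\,M_m(I-Q_n)T(\varphi)\;+\;M_m(I-Q_n)\rho_0 .
\]
In the first summand, $T(\varphi)$ is a fixed function which under the standing hypotheses is at least $C^{2r}$; writing $\mathcal{K}_m'(\varphi_m)=\mathcal{K}'(\varphi)+(\mathcal{K}_m'(\varphi_m)-\mathcal{K}'(\varphi))$ and $L_m=(I-\mathcal{K}'(\varphi))^{-1}+L_m(\mathcal{K}_m'(\varphi_m)-\mathcal{K}'(\varphi))(I-\mathcal{K}'(\varphi))^{-1}$, the correction terms are ``(discrete minus continuous) acting on a smooth function'' and are $O(\tilde h^d)$ by (\ref{eq:A1}) and (\ref{eq:2.9}) (for the $\varphi_m\to\varphi$ part one expresses $\mathcal{K}_m'(\varphi_m)-\mathcal{K}_m'(\varphi)$ through $\mathcal{K}_m''$ via the Mean Value Theorem). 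Hence $M_m(I-Q_n)T(\varphi)=(I-\mathcal{K}'(\varphi))^{-1}\mathcal{K}'(\varphi)(I-Q_n)T(\varphi)+O(\tilde h^d)$, and (\ref{eq:3.1}) with $\psi=T(\varphi)$ together with identity (iii) turns the first summand into $U(T(\varphi))\,h^{4r}+O(h^{2r}E)$.

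The delicate point is the remainder $M_m(I-Q_n)\rho_0$, for which $\|\rho_0\|_\infty=O(E)$ is useless; one must instead show $\|\rho_0\|_{2r,\infty}=O(E)$, which by tool (i) gives $M_m(I-Q_n)\rho_0=L_m\big(\mathcal{K}_m'(\varphi_m)(I-Q_n)\rho_0\big)=O(h^{2r}\|\rho_0\|_{2r,\infty})=O(h^{2r}E)$. Retracing the proof of Proposition \ref{prop:3.1} one has the decomposition
\[
\rho_0=[\mathcal{K}_m'(\varphi_m)-\mathcal{K}_m'(\varphi)](Q_n\varphi_m-\varphi_m)+\mathcal{K}_m'(\varphi)(Q_n-I)(\varphi_m-\varphi)+[\mathcal{K}_m'(\varphi)-\mathcal{K}'(\varphi)](Q_n\varphi-\varphi)+\big[\mathcal{K}'(\varphi)(Q_n\varphi-\varphi)-T(\varphi)h^{2r}\big].
\]
In the first two summands no $t$-derivative of $\kappa$ is hit, so differentiating $\le 2r$ times in $s$ and using the Mean Value Theorem, (\ref{eq:A1}) and (\ref{eq:2.8}) yields $2r$-seminorms $O(h^r\tilde h^d)$ and $O(\tilde h^d)$; the third is a composite quadrature error to which (\ref{eq:2.4}) applies after $s$-differentiation, and here the hypothesis $\frac{\partial\kappa}{\partial u}\in C^{\max\{d,3r+3\}}$ is exactly what is needed for the required $t$-derivatives of the integrand to exist (invoking (\ref{eq:2.4}) at reduced order when $d$ is large); the fourth is controlled by differentiating in $s$ the Euler--MacLaurin expansion underlying (\ref{eq:3.1}), which is legitimate because $f\in C^{\max\{d,2r+2\}}$ makes $\varphi$, and hence $T(\varphi)$ and this tail, sufficiently smooth. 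Combining gives $\|\rho_0\|_{2r,\infty}=O(E)$ and thus (\ref{eq:4.8}).

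The identity (\ref{eq:4.9}) follows by the same scheme with (\ref{eq:3.4}) replacing (\ref{eq:3.3}): $\mathcal{K}_m''(\varphi_m)((Q_n-I)\varphi_m)^2=V_1(\varphi)h^{2r}+O(E)$, the operator $U$ is now applied to $V_1(\varphi)$ instead of $T(\varphi)$, and in the remainder $\mathcal{K}_m''$ and $\mathcal{K}_m^{(3)}$ enter the Mean Value steps, all covered by (\ref{eq:4.2})--(\ref{eq:4.3}). The main obstacle I foresee is precisely the third and fourth summands of $\rho_0$ --- proving that the discrete-to-continuous defect and the $O(h^{2r+2})$ tail of (\ref{eq:3.1}) are small not only in $\|\cdot\|_\infty$ but in the full $2r$-seminorm; this is what makes the smoothness assumptions sharp and what enables the operator $\mathcal{K}_m'(\varphi_m)(I-Q_n)$ to contribute the extra factor $h^{2r}$.
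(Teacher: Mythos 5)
Your toolkit is the same as the paper's---Proposition \ref{prop:3.1} for the inner factor, the bounds (\ref{eq:A1}), (\ref{eq:2.8}), (\ref{eq:2.9}) together with a resolvent-type identity to trade $L_m\mathcal{K}_m'(\varphi_m)$ for $M=(I-\mathcal{K}'(\varphi))^{-1}\mathcal{K}'(\varphi)$, the continuous expansions (\ref{eq:3.1})--(\ref{eq:3.2}) for the leading coefficient, and the superconvergence of $\mathcal{K}_m'(\varphi_m)(I-Q_n)$ on $C^{2r}$ functions for the extra factor $h^{2r}$---but the order of operations is genuinely different. The paper keeps the whole discrete product $y_n=\mathcal{K}_m'(\varphi_m)(I-Q_n)\mathcal{K}_m'(\varphi_m)(Q_n-I)\varphi_m$ intact, first shows $\|y_n\|_\infty=O(h^{4r})$, peels off $L_m$ by the resolvent identity and only then replaces the outer $\mathcal{K}_m'(\varphi_m)$ by $\mathcal{K}'(\varphi)$; in that arrangement every ``discrete minus continuous'' correction acts on a quantity already known to be small in the sup norm ($y_n$ itself, or $(I-Q_n)\mathcal{K}_m'(\varphi_m)(Q_n\varphi_m-\varphi_m)=O(h^{3r})$), so only sup-norm information is needed for the corrections, and (\ref{eq:3.2})+(\ref{eq:3.3}) are invoked once, at the very end. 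You instead substitute (\ref{eq:3.3}) into the inner factor at the outset, obtaining the clean split $h^{2r}M_m(I-Q_n)T(\varphi)+M_m(I-Q_n)\rho_0$; your identity $(I-\mathcal{K}'(\varphi))^{-1}T(\psi)=U(\psi)$ is correct (it follows from $m(\cdot,t)=(I-\mathcal{K}'(\varphi))^{-1}\ell(\cdot,t)$, and the paper sidesteps it by quoting (\ref{eq:3.2}) directly), and the handling of the leading summand is fine modulo the $(Q_n-I)$ versus $(I-Q_n)$ sign bookkeeping, which the paper treats no more carefully.

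The cost of your ordering is that the entire burden lands on the bound $\|\rho_0\|_{2r,\infty}=O(\max\{\tilde h^d,h^{2r+2}\})$, which is a strict strengthening of Proposition \ref{prop:3.1} proved nowhere in the paper, and your treatment of it is the one place the argument is a sketch rather than a proof. Two of its four pieces require genuinely new estimates: for the quadrature defect $[\mathcal{K}_m'(\varphi)-\mathcal{K}'(\varphi)](Q_n\varphi-\varphi)$, differentiating $2r$ times in $s$ and then applying (\ref{eq:2.4}) at order $\min\{d,2r+2\}$ consumes mixed derivatives of $\frac{\partial\kappa}{\partial u}$ of total order up to $2r+\min\{d,2r+2\}$, which exceeds $\max\{d,3r+3\}$ as soon as $r\ge 2$ (e.g.\ $r=2$, $d=6$ needs order $10$ against $9$ available), so either extra smoothness or a finer argument is needed; and the $C^{2r}$-norm bound on the Euler--MacLaurin tail of (\ref{eq:3.1}) requires reopening the analysis of Kulkarni--Nidhin, which you assert but do not carry out. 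To be fair, you have put your finger on a point the paper itself glosses over: its assertion ``from (\ref{eq:3.2}) and (\ref{eq:3.3})'' for $M(I-Q_n)\mathcal{K}_m'(\varphi_m)(Q_n\varphi_m-\varphi_m)$ also needs derivative-level control of the remainder in (\ref{eq:3.3}), since sup-norm control alone yields only an $O(\max\{\tilde h^d,h^{2r+2}\})$ error rather than the stated $O(h^{2r}\max\{\tilde h^d,h^{2r+2}\})$. So your route is viable and structurally parallel, but (\ref{eq:4.8})--(\ref{eq:4.9}) do not follow until the $C^{2r}$ estimate of $\rho_0$ is actually proved and its derivative bookkeeping reconciled with the stated hypotheses; the paper's ordering reduces, though does not eliminate, that need.
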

\begin{proof}
Using the resolvent identity, we write
\begin{eqnarray*}
   L_m  &= & \left [ I - {\mathcal{K}}'   (\varphi) \right]^{-1}
+   \left [ I - {\mathcal{K}}'   (\varphi) \right]^{-1} \left [{\mathcal{K}}_m'   (\varphi_m) - 
 {\mathcal{K}_m}'   (\varphi) \right ] L_m
 \\
 &&  + ~ \left [ I - {\mathcal{K}}'   (\varphi) \right]^{-1}   \left [{\mathcal{K}}_m'   (\varphi) - 
 {\mathcal{K}}'   (\varphi) \right ] L_m.
\end{eqnarray*}
Let
\begin{eqnarray*}
y_n & = &  \mathcal{K}_m'   (\varphi_m) (I - Q_n) \mathcal {K}_m'  (\varphi_m) ( Q_n - I) \varphi_m \\&=&\mathcal{K}_m'   (\varphi_m) (I - Q_n) \mathcal {K}_m'  (\varphi_m) (Q_n - I) \varphi\\
&& + ~ \mathcal{K}_m'   (\varphi_m) (I - Q_n) \mathcal {K}_m'  (\varphi_m) (Q_n - I) (\varphi_m - \varphi ).
\end{eqnarray*}
From (\ref {eq:2.8}), (\ref {eq:4.3}) and (\ref {eq:4.6}),  it follows that
\begin{eqnarray}\label{eq:*}
\|y_n\|_\infty = O \left (  h^{2 r} \max \left \{ \tilde{h}^{d}, h^{2 r} \right \} \right ) =  O \left (  h^{4 r}  \right ).
\end{eqnarray}
Note that
\begin{eqnarray}\nonumber
L_m y_n 
& = & \left [ I - {\mathcal{K}}'   (\varphi) \right]^{-1}  y_n \\\nonumber
&& + \left [ I - {\mathcal{K}}'   (\varphi) \right]^{-1}   \left [{\mathcal{K}}_m'   (\varphi_m) - 
 {\mathcal{K}_m}'   (\varphi) \right ]
L_m  y_n\\\label {eq:4.10}
&& + \left [ I - {\mathcal{K}}'   (\varphi) \right]^{-1}  \left [{\mathcal{K}}_m'   (\varphi) - 
 {\mathcal{K}}'   (\varphi) \right ] L_m
 y_n.
\end{eqnarray}
Consider the first term
\begin{eqnarray*}
\left [ I - {\mathcal{K}}'   (\varphi) \right]^{-1}  y_n 
& = & \left [ I - {\mathcal{K}}'   (\varphi) \right]^{-1}  
\mathcal{K}'   (\varphi) (I - Q_n) \mathcal {K}_m'  (\varphi_m) (Q_n \varphi_m - \varphi_m )\\
& + & \left [ I - {\mathcal{K}}'   (\varphi) \right]^{-1}  
\left ( \mathcal {K}_m'  (\varphi_m) - \mathcal{K}'   (\varphi) \right ) (I - Q_n)  \mathcal{K}_m'   (\varphi_m)  (Q_n \varphi_m - \varphi_m )
\end{eqnarray*}
Note that from (\ref{eq:3.2}) and (\ref {eq:3.3}),
\begin{eqnarray*}
&&\left [ I - {\mathcal{K}}'   (\varphi) \right]^{-1}  
\mathcal{K}'   (\varphi) (I - Q_n) \mathcal {K}_m'  (\varphi_m) (Q_n \varphi_m - \varphi_m ) \\
&& \hspace*{4 cm} = U (T (\varphi)) h^{4 r} + O \left ( h^{2 r} \max \left \{ \tilde{h}^d, h^{2 r + 2} \right \} \right ).
\end{eqnarray*}
On the other hand,
\begin{eqnarray*}
(I - Q_n)  \mathcal{K}_m'   (\varphi_m)  (Q_n \varphi_m - \varphi_m )
&= &(I - Q_n)  \mathcal{K}_m'   (\varphi)  (Q_n \varphi - \varphi ) \\
&+& (I - Q_n)  \mathcal{K}_m'   (\varphi)  (Q_n - I) (\varphi_m - \varphi)\\
&+& (I - Q_n)  (\mathcal{K}_m'   (\varphi_m)  -  \mathcal{K}_m'   (\varphi) )(Q_n \varphi_m - \varphi_m ).
\end{eqnarray*}
From Kulkarni-Rakshit \cite[Proposition 2.3]{Kul1}, we have 
\begin{eqnarray*}
\|(I - Q_n)  \mathcal{K}_m'   (\varphi)  (Q_n \varphi - \varphi ) \|_\infty &=& O \left (h^{3 r} \right ).
\end{eqnarray*}
Combining the above estimate with the estimates (\ref{eq:2.8}),  (\ref{eq:2.15}), (\ref{eq:A1}) and (\ref{eq:4.2}), we obtain
\begin{eqnarray*}
&& \left \|\left [ I - {\mathcal{K}}'   (\varphi) \right]^{-1}  
\left ( \mathcal {K}_m'  (\varphi_m) - \mathcal{K}'   (\varphi) \right ) (I - Q_n)  \mathcal{K}_m'   (\varphi_m)  (Q_n \varphi_m - \varphi_m ) \right \|_\infty  = 
O \left (\tilde{h}^d  h^{3 r}  \right ). 
\end{eqnarray*}
Thus,
\begin{eqnarray*}
\left [ I - {\mathcal{K}}'   (\varphi) \right]^{-1}  y_n 
& = & U (T (\varphi)) h^{4 r} + O \left ( h^{2 r} \max \left \{ \tilde{h}^d, h^{2 r + 2} \right \} \right ).
\end{eqnarray*}
Using the estimate (\ref {eq:*}), it then follows that 
the second   term of (\ref{eq:4.10}) is  of the order of $\tilde{h}^{d} h^{4 r}.$
We write the third term of (\ref{eq:4.10})  as
\begin{eqnarray*}
 \left [ I - {\mathcal{K}}'   (\varphi) \right]^{-1}  \left [{\mathcal{K}}_m'   (\varphi) - 
 {\mathcal{K}}'   (\varphi) \right ]  \mathcal{K}_m'   (\varphi_m) L_m (I - Q_n) \mathcal {K}_m'  (\varphi_m) ( Q_n - I) 
 (\varphi_m - \varphi + \varphi).
\end{eqnarray*}
From (\ref{eq:2.9}), for $v \in C[0, 1],$
\begin{eqnarray*}
\left \|\left ({\mathcal{K}}_m'   (\varphi) - {\mathcal{K}}'   (\varphi) \right ) \mathcal{K}_m'   (\varphi_m) v \right \|_\infty
\leq C_2 \left \|\mathcal{K}_m'   (\varphi_m) v \right \|_{d, \infty} \tilde{h}^d \leq  C_2 C_5 \left (  \sum_{i=1}^\rho  |\omega_i| \right )  \|v\|_\infty
\tilde{h}^d.
\end{eqnarray*}
It follows that
\begin{eqnarray}\label{eq:4.11}
\left \| \left ({\mathcal{K}}_m'   (\varphi) - 
 {\mathcal{K}}'   (\varphi) \right )  \mathcal{K}_m'   (\varphi_m) \right \| = O \left (\tilde{h}^d \right ),
\end{eqnarray}
whereas as in Lemma 4.1, it can be proved that
\begin{eqnarray*}
\|(I - Q_n) \mathcal {K}_m'  (\varphi_m) (I - Q_n) \varphi \|_\infty = O \left (h^{3 r} \right ).
\end{eqnarray*}
Thus the third   term of (\ref{eq:4.10}) is  of the order of $\tilde{h}^{d} h^{3 r}.$

Hence
\begin{eqnarray*}
 L_m y_n &=& L_m \mathcal{K}_m'   (\varphi_m) (I - Q_n) \mathcal {K}_m'  (\varphi_m) ( Q_n \varphi_m - \varphi_m) \\
& = &
 U (T (\varphi)) h^{4 r} + O \left ( h^{2 r} \max \left \{ \tilde{h}^d, h^{2 r + 2} \right \} \right ),
\end{eqnarray*}
which proves (\ref {eq:4.8}). 
The proof of (\ref {eq:4.9}) is similar.
\end{proof}
\begin{lemma}\label{lemma:4.3}
If  $d \geq 2 r, \; \frac {\partial \kappa} {\partial u}  \in C^{ \max \{d, 3 r + 3 \}} (\Omega)
\;\;\; \mbox{and} \;\;\; f \in C^{\max \{d, 2 r + 2 \}} [0, 1],$  then
\begin{eqnarray*}
\left \|L_m  
\mathcal{K}_m'  (\varphi_m) (I - Q_n) \mathcal {K}_m^{(3)} (\varphi_m) ( Q_n \varphi_m - \varphi_m)^3 \right \|_\infty
=   O \left  (h^{4 r + 2} \right ).
\end{eqnarray*}
\end{lemma}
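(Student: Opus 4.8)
The plan is to use two independent sources of smallness. Write $w_n := \mathcal{K}_m^{(3)}(\varphi_m)(Q_n\varphi_m - \varphi_m)^3$, so that the quantity to estimate is $\|L_m\,\mathcal{K}_m'(\varphi_m)(I - Q_n)\,w_n\|_\infty$; since $\|L_m\| \le C_7$ it suffices to bound $\|\mathcal{K}_m'(\varphi_m)(I - Q_n)\,w_n\|_\infty$. On one hand, $Q_n\varphi_m - \varphi_m = O(h^r)$ by (\ref{eq:2.15}), so its cube, and hence $w_n$, is $O(h^{3r})$ uniformly, directly from the defining Nystr\"{o}m sum. On the other hand, the operator $\mathcal{K}_m'(\varphi_m)(I - Q_n)$ picks up an extra factor $h^{2r}$ when it is applied after a Fr\'echet derivative of $\mathcal{K}_m$; this is precisely estimate (\ref{eq:4.3}).

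First I would combine these two facts. By trilinearity of $\mathcal{K}_m^{(3)}(\varphi_m)$ together with (\ref{eq:4.3}) (with $k = 3$ and $x = \varphi_m$),
\[
\|\mathcal{K}_m'(\varphi_m)(I - Q_n)\,w_n\|_\infty
 \le \big\|\mathcal{K}_m'(\varphi_m)(I - Q_n)\mathcal{K}_m^{(3)}(\varphi_m)\big\|\;\|Q_n\varphi_m - \varphi_m\|_\infty^3
 = O(h^{2r})\,O(h^{3r}) = O(h^{5r}),
\]
and multiplication by $L_m$ shows that the left-hand side of the Lemma is $O(h^{5r})$. For $r \ge 2$ this already proves the claim, since then $5r \ge 4r + 2$.

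It remains to deal with $r = 1$, where $5r = 5 < 6 = 4r + 2$ and a sharper bound on $w_n$ is needed. Here I would use Proposition \ref{prop:3.1}: by (\ref{eq:3.5}), $w_n = V_2(\varphi)\,h^{3r} + O(\max\{\tilde h^d, h^{3r+1}\})$, and the same expansion is valid in the norm $\|\cdot\|_{2r,\infty}$, because the $s$-derivatives of $w_n$ are Nystr\"{o}m sums of exactly the same form and the proof of Proposition \ref{prop:3.1} applies to them verbatim. The decisive observation is that the leading coefficient vanishes when $r = 1$: there $\Phi_r \equiv 1$ and $\Psi(\tau) = \tau - q_1 = \tau - \tfrac12$, hence $\int_0^1 \Psi(\tau)^3\Phi_r(\tau)^3\,d\tau = \int_0^1(\tau - \tfrac12)^3\,d\tau = 0$ and so $V_2(\varphi) = 0$. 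Thus $\|w_n\|_{2r,\infty} = O(\max\{\tilde h^d, h^{3r+1}\})$, and feeding this into the superconvergence estimate $\|\mathcal{K}_m'(\varphi_m)(I - Q_n)v\|_\infty \le C_6\,\|\ell_m\|_{r,\infty}\,\|v\|_{2r,\infty}\,h^{2r}$ recalled in the proof of Lemma \ref{lemma:4.1} yields $\|\mathcal{K}_m'(\varphi_m)(I - Q_n)\,w_n\|_\infty = O(h^{2r}\max\{\tilde h^d, h^{3r+1}\})$. After multiplication by $L_m$ the $h^{3r+1}$ contribution is $O(h^{5r+1}) = O(h^{4r+2})$ (as $r \ge 1$), while the $\tilde h^d$ contribution remains within the error $O(h^{2r}\max\{\tilde h^d, h^{2r+2}\})$ already carried by (\ref{eq:1.9}), exactly as in the proof of Lemma \ref{lemma:4.2}; this gives the stated $O(h^{4r+2})$. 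The one genuinely delicate point in the whole argument is the vanishing of $V_2(\varphi)$ for $r = 1$; the rest is the routine bookkeeping of multiplying the $O(h^{2r})$ superconvergence gain against the $O(h^{3r})$ (and, for $r = 1$, $O(h^{3r+1})$) smallness of $(Q_n\varphi_m - \varphi_m)^3$.
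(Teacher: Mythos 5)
Your proposal is correct in substance and rests on exactly the same two\nobreakdash-case idea as the paper: for $r\ge 2$ the crude bound $O(h^{5r})$ obtained from (\ref{eq:2.15}) and (\ref{eq:4.3}) together with $\|L_m\|\le C_7$ already suffices since $5r\ge 4r+2$, and for $r=1$ the leading coefficient $V_2(\varphi)$ vanishes; your explicit verification ($\Phi_1\equiv 1$ and $\int_0^1(\tau-\tfrac12)^3\,d\tau=0$) is a welcome addition, as the paper only asserts $V_2(\varphi)=0$. Where you genuinely diverge is in how the extra factor $h^{2r}$ is extracted in the case $r=1$: the paper never uses $\|L_m\|\le C_7$ alone, but expands $L_m$ by the resolvent identity, trades $\mathcal{K}_m'(\varphi_m)$ for $\mathcal{K}'(\varphi)$ up to $O(\tilde h^d)$ errors, and then applies the expansion (\ref{eq:3.2}) for $M(Q_n\psi-\psi)$ to the smooth profile produced by (\ref{eq:3.5}); you instead keep $L_m$ as a bounded factor and gain $h^{2r}$ from the discrete superconvergence estimate $\|\mathcal{K}_m'(\varphi_m)(I-Q_n)v\|_\infty\le C_6\|\ell_m\|_{r,\infty}\|v\|_{2r,\infty}h^{2r}$, which forces you to invoke a $\|\cdot\|_{2r,\infty}$ version of (\ref{eq:3.5}) that the paper nowhere states. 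That claim is plausible --- the $s$-derivatives of the Nystr\"{o}m sum are sums of the same form with kernels $\partial^{p+3}\kappa/\partial s^p\partial u^3$, the scalar factor $\int_0^1\Psi^3\Phi_r^3\,d\tau$ is unchanged and hence zero for $r=1$, and the assumed smoothness covers the derivatives you need when $r=1$ --- but it is the one step you should actually prove rather than declare ``verbatim''; it plays the same role as the paper's implicit assumption that the remainder in (\ref{eq:3.5}) also picks up the factor $h^{2r}$ under $M(I-Q_n)$. Finally, both your argument and the printed proof leave a contribution of size $h^{2r}\tilde h^d$, which is $O(h^{4r+2})$ only when $\tilde h^d\le C h^{2r+2}$; you are explicit about parking it in the error term $O\left(h^{2r}\max\left\{\tilde h^d, h^{2r+2}\right\}\right)$ of Theorem \ref{thm:4.8}, whereas the paper absorbs it silently, so on that point you are no worse off than the original.
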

\begin{proof}
Let
$$w_n =  \mathcal{K}_m'  (\varphi_m) (I - Q_n) \mathcal {K}_m^{(3)} (\varphi_m) ( Q_n \varphi_m - \varphi_m)^3.
$$
Then using (\ref{eq:2.15}) and (\ref{eq:4.3}), we obtain
\begin{eqnarray}\label {eq:A3}
\|w_n\|_\infty = O \left (  h^{5 r} \right ). 
\end{eqnarray}
As in Lemma  4.2, we write
\begin{eqnarray}\nonumber
L_m w_n 
& = & \left [ I - {\mathcal{K}}'   (\varphi) \right]^{-1}  w_n \\\nonumber
&+& \left [ I - {\mathcal{K}}'   (\varphi) \right]^{-1}   \left [{\mathcal{K}}_m'   (\varphi_m) - 
 {\mathcal{K}_m}'   (\varphi) \right ]
L_m  w_n\\\label {eq:4.12}
&+& \left [ I - {\mathcal{K}}'   (\varphi) \right]^{-1}  \left [{\mathcal{K}}_m'   (\varphi) - 
 {\mathcal{K}}'   (\varphi) \right ] L_m
 w_n.
\end{eqnarray}
Note that
\begin{eqnarray*}
\left [ I - {\mathcal{K}}'   (\varphi) \right]^{-1}  w_n & = & \left [ I - {\mathcal{K}}'   (\varphi) \right]^{-1}  
\mathcal{K}'   (\varphi) (I - Q_n) \mathcal {K}_m^{(3)} (\varphi_m) ( Q_n \varphi_m - \varphi_m)^3\\
& + & \left[ I - {\mathcal{K}}'   (\varphi) \right]^{-1}  
\left( \mathcal {K}_m'  (\varphi_m) - \mathcal{K}'   (\varphi) \right) (I - Q_n) \mathcal {K}_m^{(3)} (\varphi_m) ( Q_n \varphi_m - \varphi_m)^3.
\end{eqnarray*}
Using the asymptotic expansions (\ref {eq:3.2}) and (\ref{eq:3.5}) we obtain
\begin{eqnarray*}
&& \left [ I - {\mathcal{K}}'   (\varphi) \right]^{-1}  
\mathcal{K}'   (\varphi) (I - Q_n) \mathcal {K}_m^{(3)} (\varphi_m) ( Q_n \varphi_m - \varphi_m)^3 \\
&& \hspace*{3 cm} = U (V_2 (\varphi))
h^{5 r} + O \left ( h^{2 r} \max \left \{ \tilde{h}^d, h^{3 r + 1} \right \} \right ).
\end{eqnarray*}
If $r = 1,$ then $V_2 (\varphi) = 0$ and if $r \geq 2,$ then $5 r \geq 4 r + 2.$
Hence it follows that
\begin{eqnarray}\nonumber
\left \|\left [ I - {\mathcal{K}}'   (\varphi) \right]^{-1}  
\mathcal{K}'   (\varphi) (I - Q_n) \mathcal {K}_m^{(3)} (\varphi_m) ( Q_n \varphi_m - \varphi_m)^3 \right \|_\infty
 = 
 O \left  (h^{4 r + 2} \right ).
\end{eqnarray}
On the other hand, using (\ref {eq:2.9}),  (\ref {eq:2.15}), (\ref{eq:A1}) and  (\ref {eq:4.2})  we obtain
\begin{eqnarray*}
\|\left [ I - {\mathcal{K}}'   (\varphi) \right]^{-1}  
\left ( \mathcal {K}_m'  (\varphi_m) - \mathcal{K}'   (\varphi) \right ) (I - Q_n) \mathcal {K}_m^{(3)} (\varphi_m) ( Q_n \varphi_m - \varphi_m)^3\|_\infty = O (\tilde{h}^d h^{3 r}).
\end{eqnarray*}
Sine $d \geq 2 r,$ it follows that
\begin{eqnarray}\label{eq:A4}
\left \|\left [ I - {\mathcal{K}}'   (\varphi) \right]^{-1}  w_n\right \|_\infty = O \left (h^{ 4 r + 2 }  \right ).
\end{eqnarray}
Using the estimates (\ref{eq:A1}) and (\ref{eq:A3}),  we see that 
\begin{eqnarray}\label{eq:A5}
\left \|\left [ I - {\mathcal{K}}'   (\varphi) \right]^{-1}   \left [{\mathcal{K}}_m'   (\varphi_m) - 
 {\mathcal{K}_m}'   (\varphi) \right ]
L_m  w_n \right \|_\infty = O \left (\tilde{h}^{d} h^{5 r} \right ).
\end{eqnarray}
We write the third term of (\ref{eq:4.12})  as
\begin{eqnarray*}
 \left [ I - {\mathcal{K}}'   (\varphi) \right]^{-1}  \left [{\mathcal{K}}_m'   (\varphi) - 
 {\mathcal{K}}'   (\varphi) \right ]  \mathcal{K}_m'   (\varphi_m) L_m (I - Q_n) \mathcal {K}_m^{(3)} (\varphi_m) ( Q_n \varphi_m - \varphi_m)^3.
 \end{eqnarray*}
From (\ref{eq:2.15}), (\ref{eq:4.2}) and (\ref{eq:4.11}), it follows that the above term is of the order of 
$\tilde{h}^{d} h^{4 r}.$ Thus,
\begin{eqnarray}\label{eq:A6}
\left \|\left [ I - {\mathcal{K}}'   (\varphi) \right]^{-1}  \left [{\mathcal{K}}_m'   (\varphi) - 
 {\mathcal{K}}'   (\varphi) \right ] L_m w_n \right \|_\infty = O \left (\tilde{h}^{d} h^{4 r} \right).
\end{eqnarray}
Since $ d \geq 2r,$ it follows from (\ref{eq:4.12}) - (\ref{eq:A6}) that
\begin{eqnarray*}
\left \|L_mw_n \right \|_\infty = \left \|L_m
\mathcal{K}_m'  (\varphi_m) (I - Q_n) \mathcal {K}_m^{(3)} (\varphi_m) ( Q_n \varphi_m - \varphi_m)^3 \right \|_\infty
=   O \left  (h^{4 r + 2} \right ).
\end{eqnarray*}
This completes the proof.
\end{proof}
We now obtain the asymptotic expansion for the first term of (\ref {eq:4.7}).

\begin{proposition}\label{prop:4.4}
If  $d \geq 2 r, \; \frac {\partial \kappa} {\partial u}  \in C^{ \max \{d, 3 r + 3 \}} (\Omega)
\;\;\; \mbox{and} \;\;\; f \in C^{\max \{d, 2 r + 2 \}} [0, 1],$  then
\begin{eqnarray*}
&&L_m  \mathcal{K}_m'   (\varphi_m) \left \{  \mathcal {K}_m (\varphi_m) -  \tilde{\mathcal{K}}_n^M (\varphi_m) \right \} \\
&& \hspace* { 2 cm} =
-  U \left (T (\varphi) + \frac {V_1 (\varphi)} {2} \right ) h^{4 r}+ O \left ( h^{ 2 r} \max \left \{ \tilde{h}^d, h^{2 r + 2} \right \} \right ).
\end{eqnarray*}
\end{proposition}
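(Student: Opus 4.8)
The plan is to reduce the left-hand side to a Taylor expansion of $\mathcal{K}_m$ about $\varphi_m$ and then feed the pieces to Lemmas 4.1--4.3. Substituting the definition (2.16) of $\tilde{\mathcal{K}}_n^M$ and cancelling gives the identity
$$\mathcal{K}_m(\varphi_m) - \tilde{\mathcal{K}}_n^M(\varphi_m) = (I-Q_n)\bigl[\mathcal{K}_m(\varphi_m) - \mathcal{K}_m(Q_n\varphi_m)\bigr],$$
so it suffices to expand $\mathcal{K}_m(Q_n\varphi_m) - \mathcal{K}_m(\varphi_m)$. I would use the generalised Taylor expansion of $\mathcal{K}_m$ about $\varphi_m$ to third order with integral remainder,
$$\mathcal{K}_m(Q_n\varphi_m) - \mathcal{K}_m(\varphi_m) = \mathcal{K}_m'(\varphi_m)(Q_n\varphi_m - \varphi_m) + \tfrac12\mathcal{K}_m''(\varphi_m)(Q_n\varphi_m - \varphi_m)^2 + \tfrac16\mathcal{K}_m^{(3)}(\varphi_m)(Q_n\varphi_m - \varphi_m)^3 + R_4,$$
where $R_4(s)=\int_0^1\tfrac{(1-\theta)^3}{6}\,\mathcal{K}_m^{(4)}\bigl(\varphi_m+\theta(Q_n\varphi_m-\varphi_m)\bigr)(Q_n\varphi_m-\varphi_m)^4(s)\,d\theta$; by (2.15) the point $\varphi_m+\theta(Q_n\varphi_m-\varphi_m)$ lies in $B(\varphi,\delta)$ for $n$ large, so $C_5$ and Lemma 4.1 are available along this segment.

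Next, I would apply $L_m\mathcal{K}_m'(\varphi_m)$ to $-(I-Q_n)$ times each of the four terms, using $Q_n\varphi_m-\varphi_m=(Q_n-I)\varphi_m$. The first contribution is exactly the left-hand side of (4.8), giving $-U(T(\varphi))h^{4r}+O(h^{2r}\max\{\tilde{h}^d,h^{2r+2}\})$; the second, with its factor $\tfrac12$, is the left-hand side of (4.9), giving $-\tfrac12 U(V_1(\varphi))h^{4r}$ up to the same error; the third is controlled by Lemma 4.3 and is $O(h^{4r+2})$. For the remainder term $L_m\mathcal{K}_m'(\varphi_m)(I-Q_n)R_4$, I would bound $\mathcal{K}_m'(\varphi_m)(I-Q_n)\mathcal{K}_m^{(4)}(x)$ by $O(h^{2r})$ uniformly for $x\in B(\varphi,\delta)$ via (4.3) with $k=4$, combine with $\|Q_n\varphi_m-\varphi_m\|_\infty^4=O(h^{4r})$ from (2.15) and the uniform bound $\|L_m\|\le C_7$, obtaining $O(h^{6r})$, which is absorbed into $O(h^{4r+2})$ since $r\ge1$.

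Collecting the two leading contributions and using the linearity of $\psi\mapsto U(\psi)$ then yields
$$L_m\mathcal{K}_m'(\varphi_m)\bigl\{\mathcal{K}_m(\varphi_m)-\tilde{\mathcal{K}}_n^M(\varphi_m)\bigr\} = -U\!\left(T(\varphi)+\tfrac{V_1(\varphi)}{2}\right)h^{4r} + O\!\left(h^{2r}\max\{\tilde{h}^d,h^{2r+2}\}\right),$$
the $O(h^{4r+2})$ and $O(h^{6r})$ terms being merged into the stated error because $h^{4r+2}=h^{2r}\cdot h^{2r+2}$. I do not expect a genuine obstacle: the analytic substance is already contained in Lemmas 4.2 and 4.3, and what remains is bookkeeping. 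The points needing care are keeping the signs consistent through the Taylor expansion, checking that the intermediate point in $R_4$ stays in $B(\varphi,\delta)$, and verifying that every stray error term ($O(h^{4r+2})$, $O(h^{6r})$, and the $\tilde{h}^d$-weighted quantities produced along the way) is dominated by $O(h^{2r}\max\{\tilde{h}^d,h^{2r+2}\})$, which follows from $\tilde{h}\le h$ and $d\ge 2r$.
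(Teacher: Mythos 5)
Your proposal is correct and follows essentially the same route as the paper: the same reduction $\mathcal{K}_m(\varphi_m)-\tilde{\mathcal{K}}_n^M(\varphi_m)=(I-Q_n)\bigl[\mathcal{K}_m(\varphi_m)-\mathcal{K}_m(Q_n\varphi_m)\bigr]$, the same third-order Taylor expansion (the paper uses the Lagrange form of the remainder rather than the integral form, an immaterial difference), the same use of Lemmas 4.2 and 4.3 for the leading and cubic terms, and the same $O(h^{6r})$ bound for the fourth-order remainder via (2.15) and (4.3). Your bookkeeping of the error terms and the absorption of $O(h^{4r+2})$ and $O(h^{6r})$ into $O\bigl(h^{2r}\max\{\tilde{h}^d,h^{2r+2}\}\bigr)$ matches the paper's conclusion.
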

\begin{proof}
Note that
\begin{eqnarray*}
\mathcal {K}_m (\varphi_m) -  \tilde{\mathcal{K}}_n^M (\varphi_m) &= &(I - Q_n) \left(\mathcal {K}_m (\varphi_m) 
- {\mathcal{K}}_m (Q_n \varphi_m)\right).
\end{eqnarray*}
By Taylor's theorem,
\begin{eqnarray*}
\mathcal{K}_m (Q_n\varphi_m) - \mathcal{K}_m (\varphi_m) &=& \mathcal {K}_m'  (\varphi_m) (Q_n \varphi_m - \varphi_m )
+ \frac {\mathcal {K}_m''  (\varphi_m)} {2}  (Q_n \varphi_m - \varphi_m )^2\\
&+& \frac {\mathcal {K}_m^{(3)}  (\varphi_m)} {6}  (Q_n \varphi_m - \varphi_m )^3
+ \frac {\mathcal {K}_m^{(4)}  (\xi_m)} {24}  (Q_n \varphi_m - \varphi_m )^4,
\end{eqnarray*}
where $\xi_m \in B (\varphi, \delta).$
Hence
\begin{eqnarray*}
L_m  \mathcal{K}_m'   (\varphi_m) \left \{  \mathcal {K}_m (\varphi_m) -  \tilde{\mathcal{K}}_n^M (\varphi_m) \right \}
&=& - L_m  \mathcal{K}_m'   (\varphi_m)  (I - Q_n) \mathcal {K}_m'  (\varphi_m) (Q_n \varphi_m - \varphi_m )\\
&& - \frac {1} {2} \; L_m  \mathcal{K}_m'   (\varphi_m)  (I - Q_n)  {\mathcal {K}_m''  (\varphi_m)}  (Q_n \varphi_m - \varphi_m )^2\\
&&  - \frac {1} {6} \; L_m  \mathcal{K}_m'   (\varphi_m)  (I - Q_n) {\mathcal {K}_m^{(3)}  (\varphi_m)} (Q_n \varphi_m - \varphi_m )^3\\
&& - \frac {1} {24} \; L_m  \mathcal{K}_m'   (\varphi_m)  (I - Q_n) {\mathcal {K}_m^{(4)}  (\xi_m)}  (Q_n \varphi_m - \varphi_m )^4.
\end{eqnarray*}
Using (\ref{eq:2.15}) and (\ref{eq:4.3}) we deduce that
\begin{eqnarray*}
\left \|L_m  \mathcal{K}_m'   (\varphi_m)  (I - Q_n) {\mathcal {K}_m^{(4)}  (\xi_m)}  (Q_n \varphi_m - \varphi_m )^4 \right \|_\infty 
= O \left (h^{6 r} \right ).
\end{eqnarray*}
Using the above estimate,  Lemma 4.2 and Lemma 4.3, we obtain
\begin{eqnarray*}
&&L_m  \mathcal{K}_m'   (\varphi_m) \left \{  \mathcal {K}_m (\varphi_m) -  \tilde{\mathcal{K}}_n^M (\varphi_m) \right \}=
-  U \left (T (\varphi) + \frac {V_1 (\varphi)} {2} \right ) h^{4 r}+ O \left (h^{4 r + 2} \right ),
\end{eqnarray*}
which completes the proof.
\end{proof}
We quote the following result from Kulkarni-Rakshit \cite[Proposition 4.6]{Kul1}:
  \begin{eqnarray} \label{eq:A6}
  \left \|\tilde{\mathcal{K}}_n^M  (z_n^M) - \tilde{\mathcal{K}}_n^M  (\varphi_m) 
 -  \left (\tilde{\mathcal{K}}_n^M \right )' (\varphi_m) (z_n^M - \varphi_m) \right \|_\infty = O \left (\max \left \{ \tilde{h}^{d}, h^{3 r} \right \}^2 \right ).
  \end {eqnarray}
It follows that the second  term on the right hand side of (\ref{eq:4.7}) is of the order of
$\max \left \{ \tilde{h}^{d}, h^{3 r} \right \}^2 .$

We now want obtain the order of convergence of the third term on the right hand side of 
(\ref{eq:4.7}). For this purpose we first prove the following result.
\begin{lemma}\label{lemma:4.5}
If  $d \geq 2 r, \; \frac {\partial \kappa} {\partial u}  \in C^{ \max \{d, 3 r + 3 \}} (\Omega)
\;\;\; \mbox{and} \;\;\; f \in C^{\max \{d, 2 r + 2 \}} [0, 1],$  then
\begin{eqnarray}\nonumber
\left \|\mathcal{K}_m'   (\varphi_m) (I - Q_n) \mathcal{K}_m'   (\varphi_m) (I - Q_n)
 \left [\tilde{\mathcal{K}}_n^M  (\varphi_m) - \mathcal{K}_m  (\varphi_m) \right ] \right \|_\infty 
 = O \left (  h^{6 r}  \right ),\\\label {eq:4.13}
\end{eqnarray}
\begin{eqnarray} \label {eq:4.14}
  \left \|\mathcal{K}_m'   (\varphi_m) (I - Q_n) \mathcal{K}_m'   (\varphi_m) (I - Q_n)
 \left (\tilde{\mathcal{K}}_n^M \right )' (\varphi_m) \right \| 
 = O \left (h^{3 r}   \right ).
  \end{eqnarray}
\end{lemma}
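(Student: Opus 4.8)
The plan is to treat (\ref{eq:4.13}) and (\ref{eq:4.14}) separately, in each case rewriting the operator so that the composition $\mathcal{K}_m'(\varphi_m)(I-Q_n)$ appears repeatedly and then applying the iterated Gauss--point estimates (\ref{eq:4.4}), (\ref{eq:4.6}) and one natural extension of them.

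For (\ref{eq:4.13}) I would begin from the identity $\tilde{\mathcal{K}}_n^M(\varphi_m) - \mathcal{K}_m(\varphi_m) = (I-Q_n)(\mathcal{K}_m(Q_n\varphi_m) - \mathcal{K}_m(\varphi_m))$, which is immediate from (\ref{eq:2.16}) and already appeared in the proof of Proposition \ref{prop:4.4}. By Taylor's theorem about $\varphi_m$,
$$\mathcal{K}_m(Q_n\varphi_m) - \mathcal{K}_m(\varphi_m) = \mathcal{K}_m'(\varphi_m)(Q_n\varphi_m-\varphi_m) + \frac{1}{2}\,\mathcal{K}_m''(\xi_m)(Q_n\varphi_m-\varphi_m)^2, \qquad \xi_m\in B(\varphi,\delta),$$
and since $\mathcal{K}_m'(\varphi_m)(Q_n\varphi_m-\varphi_m) = -\mathcal{K}_m'(\varphi_m)(I-Q_n)\varphi_m$, applying $\mathcal{K}_m'(\varphi_m)(I-Q_n)\mathcal{K}_m'(\varphi_m)(I-Q_n)$ on the left and absorbing the outer $(I-Q_n)$ via $(I-Q_n)^2 = I-Q_n$ gives
$$\mathcal{K}_m'(\varphi_m)(I-Q_n)\mathcal{K}_m'(\varphi_m)(I-Q_n)\bigl[\tilde{\mathcal{K}}_n^M(\varphi_m)-\mathcal{K}_m(\varphi_m)\bigr] = -(\mathcal{K}_m'(\varphi_m)(I-Q_n))^3\varphi_m + \frac{1}{2}(\mathcal{K}_m'(\varphi_m)(I-Q_n))^2\mathcal{K}_m''(\xi_m)(Q_n\varphi_m-\varphi_m)^2.$$
For the first term I would use the three--fold analogue of (\ref{eq:4.6}), namely $\|(\mathcal{K}_m'(\varphi_m)(I-Q_n))^3 v\|_\infty \le C\|v\|_{2r,\infty}h^{6r}$ for $v\in C^{2r}[0,1]$, which is proved exactly as (\ref{eq:4.6}) by one more iteration of the basic Gauss--point estimate and is available since $\partial\kappa/\partial u\in C^{3r+3}(\Omega)$ and $\varphi_m\in C^{2r}[0,1]$; with $v=\varphi_m$ this gives $O(h^{6r})$. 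For the second term, $\|\mathcal{K}_m''(\xi_m)(Q_n\varphi_m-\varphi_m)^2\|_{2r,\infty} = O(h^{2r})$ by the definition of $C_5$ and (\ref{eq:2.15}), so (\ref{eq:4.6}) contributes a further factor $h^{4r}$ and this term is $O(h^{6r})$ as well. Adding the two contributions yields (\ref{eq:4.13}).

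For (\ref{eq:4.14}) I would differentiate (\ref{eq:2.16}) to obtain $(\tilde{\mathcal{K}}_n^M)'(\varphi_m) = Q_n\mathcal{K}_m'(\varphi_m) + (I-Q_n)\mathcal{K}_m'(Q_n\varphi_m)Q_n$. Composing with $\mathcal{K}_m'(\varphi_m)(I-Q_n)\mathcal{K}_m'(\varphi_m)(I-Q_n)$ on the left, the first summand drops out because $(I-Q_n)Q_n = 0$, and the second collapses, using $(I-Q_n)^2=I-Q_n$, to $\mathcal{K}_m'(\varphi_m)(I-Q_n)\mathcal{K}_m'(\varphi_m)(I-Q_n)\mathcal{K}_m'(Q_n\varphi_m)\,Q_n$. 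Since $\|Q_n\varphi_m - \varphi\|_\infty\to 0$ by (\ref{eq:2.8}) and (\ref{eq:2.15}), we have $Q_n\varphi_m\in B(\varphi,\delta)$ for all large $n$, so (\ref{eq:4.4}) bounds the triple composition in operator norm by $O(h^{4r})$; combined with $\sup_n\|Q_n|_{C[0,1]}\|\le C_3$ from (\ref{eq:2.10}), this gives $O(h^{4r}) = O(h^{3r})$, which proves (\ref{eq:4.14}).

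The algebra --- the identities for $\tilde{\mathcal{K}}_n^M(\varphi_m)$ and $(\tilde{\mathcal{K}}_n^M)'(\varphi_m)$, the cancellation $(I-Q_n)Q_n=0$, and the bookkeeping of powers of $h$ --- is routine; the one genuine obstacle is the leading term $(\mathcal{K}_m'(\varphi_m)(I-Q_n))^3\varphi_m$ of (\ref{eq:4.13}). Using only (\ref{eq:4.4}) or (\ref{eq:4.6}) together with the crude bound $\|\mathcal{K}_m'(\varphi_m)(I-Q_n)\varphi_m\|_{2r,\infty}=O(h^r)$ on the innermost factor loses a power of $h^r$ and yields only $O(h^{5r})$. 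The resolution is that each factor $\mathcal{K}_m'(\varphi_m)(I-Q_n)$ still contributes a full $h^{2r}$ even when its output is measured in the $C^{2r}$--norm: the $p$--th $s$--derivative of $\mathcal{K}_m'(\varphi_m)(I-Q_n)v$ equals the discrete integral operator with kernel $\partial_s^p\ell_m$ applied to $(I-Q_n)v$, and the Gauss--point cancellation underlying (\ref{eq:4.6}) applies to it with a coefficient controlled by $C_6\|\ell_m\|_{3r,\infty}\|v\|_{2r,\infty}$. Iterating this observation three times is exactly what makes the three--fold estimate hold, and the rest of the proof then proceeds as above.
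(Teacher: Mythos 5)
Your argument is correct, but it is organized differently from the paper's, and the comparison is worth recording. For (\ref{eq:4.13}) the paper also starts from $\tilde{\mathcal{K}}_n^M(\varphi_m)-\mathcal{K}_m(\varphi_m)=(I-Q_n)\bigl(\mathcal{K}_m(Q_n\varphi_m)-\mathcal{K}_m(\varphi_m)\bigr)$, but then uses only a first--order (mean value) expansion, $(I-Q_n)\mathcal{K}_m'(\xi_m)(Q_n\varphi_m-\varphi_m)$, and splits $Q_n\varphi_m-\varphi_m=(Q_n\varphi-\varphi)+(Q_n-I)(\varphi_m-\varphi)$: the first piece is handled by the two--fold estimate (\ref{eq:4.6}) together with the bound $\|\mathcal{K}_m'(\xi_m)(Q_n\varphi-\varphi)\|_{2r,\infty}=O(h^{2r})$ --- which is precisely the ``each factor gains $h^{2r}$ even in the $C^{2r}$--norm'' observation you isolate at the end --- while the second piece is disposed of by (\ref{eq:4.4}) and $\|\varphi_m-\varphi\|_\infty=O(\tilde h^{d})\le h^{2r}$. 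This split is what lets the paper work only with the smooth exact solution $\varphi$ and with estimates already proved in \cite{Kul1}. Your route instead takes a second--order Taylor expansion and a three--fold analogue of (\ref{eq:4.6}) applied directly to $\varphi_m$; that is sound, but it obliges you to supply two things you only assert: the three--fold bound itself (not available in \cite{Kul1}; your sketch via iterating the $C^{2r}$--norm superconvergence is the right proof and is essentially the same computation the paper performs once), and the uniform bound $\sup_m\|\varphi_m\|_{2r,\infty}<\infty$, which does hold because $\varphi_m=\mathcal{K}_m(\varphi_m)+f$ with $\kappa\in C^{d}(\Omega)$, $f\in C^{d}[0,1]$, $d\ge 2r$, but should be said, since the paper's decomposition is designed exactly to avoid differentiating $\varphi_m$. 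For (\ref{eq:4.14}) your argument is cleaner than the paper's: after the cancellation $(I-Q_n)Q_n=0$ and idempotency, you apply (\ref{eq:4.4}) directly at $x=Q_n\varphi_m$, which is legitimate since $\|Q_n\varphi_m-\varphi\|_\infty\to 0$ by (\ref{eq:2.8}) and (\ref{eq:2.15}) so $Q_n\varphi_m\in B(\varphi,\delta)$ for large $n$, and with (\ref{eq:2.10}) this even gives the sharper $O(h^{4r})$; the paper instead writes $\mathcal{K}_m'(Q_n\varphi_m)=\bigl(\mathcal{K}_m'(Q_n\varphi_m)-\mathcal{K}_m'(\varphi_m)\bigr)+\mathcal{K}_m'(\varphi_m)$ and bounds the difference term via (\ref{eq:2.15}), (\ref{eq:A1}) and (\ref{eq:4.3}), which only yields the stated $O(h^{3r})$ --- sufficient for Proposition \ref{prop:4.6}, but weaker than what your one--line argument delivers.
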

\begin{proof}
Note that
 \begin{eqnarray*}
 \tilde{\mathcal{K}}_n^M  (\varphi_m) - \mathcal{K}_m  (\varphi_m) & = & (I - Q_n) (\mathcal{K}_m  (Q_n \varphi_m)
 - \mathcal{K}_m  (\varphi_m) ) \\
  &= & (I - Q_n) \mathcal{K}_m'  (\xi_m) (Q_n \varphi_m - \varphi_m),
 \end{eqnarray*}
 where $\xi_m \in B (\varphi, \delta).$
Hence
\begin{eqnarray}\nonumber
 &&\|\mathcal{K}_m'   (\varphi_m) (I - Q_n) \mathcal{K}_m'   (\varphi_m) (I - Q_n) \mathcal{K}_m'  (\xi_m) (Q_n \varphi_m - \varphi_m) \|_\infty \\\nonumber
 && \hspace*{2 cm} \leq \|\mathcal{K}_m'   (\varphi_m) (I - Q_n) \mathcal{K}_m'   (\varphi_m) (I - Q_n) \mathcal{K}_m'  (\xi_m) (Q_n \varphi - \varphi) \|_\infty\\\nonumber
 && \hspace*{2 cm}+ \|\mathcal{K}_m'   (\varphi_m) (I - Q_n) \mathcal{K}_m'   (\varphi_m) (I - Q_n) \mathcal{K}_m'  (\xi_m) (Q_n - I) ( \varphi_m - \varphi) \|_\infty.\\\label {eq:4.15}
 \end{eqnarray}
 Recall from (\ref{eq:4.6})  that
 \begin{eqnarray*}
  &&\|\mathcal{K}_m'   (\varphi_m) (I - Q_n) \mathcal{K}_m'   (\varphi_m) (I - Q_n) \mathcal{K}_m'  (\xi_m) (Q_n \varphi - \varphi) \|_\infty\\
  && \hspace*{2 cm} \leq (C_6)^2 \|\ell_m\|_{r, \infty} \|\ell_m\|_{3 r, \infty} 
  \| \mathcal{K}_m'  (\xi_m) (Q_n \varphi - \varphi) \|_{2 r, \infty} h^{4 r}
 \end{eqnarray*}
and it can be checked that
$$\| \mathcal{K}_m'  (\xi_m) (Q_n \varphi - \varphi) \|_{2 r, \infty} \leq C_5 C_6 \|\varphi\|_{2 r, \infty } h^{2 r}.$$
Thus,
\begin{eqnarray}\label{eq:4.16}
  &&\|\mathcal{K}_m'   (\varphi_m) (I - Q_n) \mathcal{K}_m'   (\varphi_m) (I - Q_n) \mathcal{K}_m'  (\xi_m) (Q_n \varphi - \varphi) \|_\infty
  = O \left (h^{6 r} \right ).
    \end{eqnarray}
  On the other hand, since $d \geq 2 r$ and $\tilde {h} \leq h,$ from (\ref {eq:2.8}) and (\ref{eq:4.4}), 
  \begin{eqnarray}\nonumber
 && \|\mathcal{K}_m'   (\varphi_m) (I - Q_n) \mathcal{K}_m'   (\varphi_m) (I - Q_n) \mathcal{K}_m'  (\xi_m) (Q_n - I) ( \varphi_m - \varphi) \|_\infty \\\nonumber
  & & \hspace*{2 cm} \leq \|\mathcal{K}_m'   (\varphi_m) (I - Q_n) \mathcal{K}_m'   (\varphi_m) (I - Q_n) \mathcal{K}_m'  (\xi_m)\| (1 + \|Q_n\|) \|\varphi_m - \varphi \|_\infty \\\label {eq:4.17}
 & & \hspace*{2 cm}   = O (h^{4 r} \tilde{h}^d ) = O \left ( h^{ 6 r} \right ).
 \end{eqnarray}
 The estimate (\ref{eq:4.13}) then follows from (\ref{eq:4.15}), (\ref{eq:4.16})  and (\ref{eq:4.17}).
 
 In order to prove (\ref{eq:4.14}),
 recall that
 $$\left (\tilde{\mathcal{K}}_n^M \right )' (\varphi_m)  = Q_n \mathcal{K}_m'   (\varphi_m) 
 + (I - Q_n) \mathcal{K}_m'   (Q_n \varphi_m) Q_n.$$
Hence
\begin{eqnarray*}
 && \mathcal{K}_m'   (\varphi_m) (I - Q_n) \mathcal{K}_m'   (\varphi_m) (I - Q_n)
 \left (\tilde{\mathcal{K}}_n^M \right )' (\varphi_m) \\
 && \hspace*{2 cm} = \mathcal{K}_m'   (\varphi_m) (I - Q_n) \mathcal{K}_m'   (\varphi_m) (I - Q_n)
 \left(\mathcal{K}_m'   (Q_n \varphi_m) - \mathcal{K}_m'   (\varphi_m) \right) Q_n \\
 && \hspace*{2 cm} + \mathcal{K}_m'   (\varphi_m) (I - Q_n) \mathcal{K}_m'   (\varphi_m) (I - Q_n)
  \mathcal{K}_m'   (\varphi_m)  Q_n 
\end{eqnarray*}
The  result (\ref{eq:4.14}) follows from  (\ref {eq:2.15}), (\ref {eq:A1}),   (\ref{eq:4.3}) and (\ref{eq:4.4}).  
\end{proof}

We now obtain the order of convergence of the third term in (\ref{eq:4.7}).
\begin{proposition}\label{prop:4.6}
If  $d \geq 2 r, \; \frac {\partial \kappa} {\partial u}  \in C^{ \max \{d, 3 r + 3 \}} (\Omega)
\;\;\; \mbox{and} \;\;\; f \in C^{\max \{d, 2 r + 2 \}} [0, 1],$  then
\begin{align*}
 \left \| \mathcal{K}_m'   (\varphi_m) \left \{
\left (\left (  \tilde{\mathcal{K}}_n^M \right )' (\varphi_m)  -  \mathcal{K}_m'   (\varphi_m) \right  ) (z_n^M - \varphi_m) \right \} \right \|_\infty 
 = O \left (  h^{3 r} \max \left \{ \tilde{h}^{d}, h^{3 r} \right \} \right ).
 \end{align*}
\end{proposition}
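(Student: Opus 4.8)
The plan is to decompose the operator $\bigl(\tilde{\mathcal{K}}_n^M\bigr)'(\varphi_m) - \mathcal{K}_m'(\varphi_m)$ using the explicit formula for $\bigl(\tilde{\mathcal{K}}_n^M\bigr)'(\varphi_m)$ recalled in the proof of Lemma 4.5, namely
$$\bigl(\tilde{\mathcal{K}}_n^M\bigr)'(\varphi_m) = Q_n \mathcal{K}_m'(\varphi_m) + (I - Q_n)\mathcal{K}_m'(Q_n\varphi_m) Q_n,$$
so that
$$\bigl(\tilde{\mathcal{K}}_n^M\bigr)'(\varphi_m) - \mathcal{K}_m'(\varphi_m) = -(I - Q_n)\mathcal{K}_m'(\varphi_m)(I - Q_n) + (I - Q_n)\bigl(\mathcal{K}_m'(Q_n\varphi_m) - \mathcal{K}_m'(\varphi_m)\bigr)Q_n.$$
The second summand is harmless: by the Mean Value Theorem argument behind \eqref{eq:A1} together with \eqref{eq:2.15}, $\|(\mathcal{K}_m'(Q_n\varphi_m) - \mathcal{K}_m'(\varphi_m))\| = O(\|Q_n\varphi_m - \varphi_m\|_\infty) = O(h^r)$, and composing with $\mathcal{K}_m'(\varphi_m)$ on the left and applying it to $z_n^M - \varphi_m$, whose norm is $O(\max\{\tilde h^d, h^{3r}\})$ by \eqref{eq:1.7}, gives a contribution of order $h^r \max\{\tilde h^d, h^{3r}\}$, which is dominated by the claimed bound.

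The main term to control is therefore $-\mathcal{K}_m'(\varphi_m)(I - Q_n)\mathcal{K}_m'(\varphi_m)(I - Q_n)(z_n^M - \varphi_m)$. Here I would \emph{not} simply apply the estimate $\|\mathcal{K}_m'(\varphi_m)(I-Q_n)v\|_\infty = O(h^{2r})\|v\|_{2r,\infty}$, since $z_n^M - \varphi_m$ need not be smooth on a fixed mesh. Instead, substitute the identity \eqref{eq:4.7} for $\mathcal{K}_m'(\varphi_m)(z_n^M - \varphi_m)$ itself — that is, use the fixed-point structure recursively. Writing $z_n^M - \varphi_m = L_m\mathcal{K}_m'(\varphi_m)(z_n^M-\varphi_m) + L_m\bigl\{\mathcal{K}_m(\varphi_m) - \tilde{\mathcal{K}}_n^M(\varphi_m)\bigr\} + (\text{lower order})$, or more directly by inserting \eqref{eq:4.7} into the slot after the inner $(I - Q_n)$, the operator $\mathcal{K}_m'(\varphi_m)(I - Q_n)\mathcal{K}_m'(\varphi_m)(I - Q_n)$ gets applied to each of the three terms of \eqref{eq:4.7}. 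The first of these is exactly $\mathcal{K}_m'(\varphi_m)(I - Q_n)\mathcal{K}_m'(\varphi_m)(I - Q_n)\bigl[\tilde{\mathcal{K}}_n^M(\varphi_m) - \mathcal{K}_m(\varphi_m)\bigr]$ up to the bounded factor $L_m\mathcal{K}_m'(\varphi_m)$, which is $O(h^{6r})$ by \eqref{eq:4.13}; the second acquires the extra factor $\bigl(\tilde{\mathcal{K}}_n^M\bigr)'(\varphi_m)$ and then $(z_n^M - \varphi_m)$, giving $O(h^{3r})\cdot O(\max\{\tilde h^d, h^{3r}\})$ by \eqref{eq:4.14} and \eqref{eq:1.7}; and the third term picks up the factor $\bigl(\tilde{\mathcal{K}}_n^M\bigr)'(\varphi_m) - \mathcal{K}_m'(\varphi_m)$ applied to $z_n^M - \varphi_m$ again, which by Lemma 4.5 combined with \eqref{eq:1.7} is of order $h^{3r}\max\{\tilde h^d, h^{3r}\}$ as well (the error term in \eqref{eq:4.7} beyond these three is $O(\max\{\tilde h^d, h^{3r}\}^2)$ by \eqref{eq:A6}, harmless after the $O(h^{2r})$ gained from one application of $\mathcal{K}_m'(\varphi_m)(I-Q_n)$).

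Collecting, every contribution is bounded by $h^{3r}\max\{\tilde h^d, h^{3r}\}$, which is the assertion. The step I expect to be the main obstacle is the bookkeeping in the recursive substitution: one must make sure that when \eqref{eq:4.7} is inserted, the two nested copies of $(I - Q_n)$ sitting to the left are correctly paired with the operators $\mathcal{K}_m'(\varphi_m)$ appearing in \eqref{eq:4.13} and \eqref{eq:4.14} — in particular that Lemma 4.5 is invoked in exactly the form stated (with the inner operator being either $\tilde{\mathcal{K}}_n^M(\varphi_m) - \mathcal{K}_m(\varphi_m)$ or $\bigl(\tilde{\mathcal{K}}_n^M\bigr)'(\varphi_m)$) and that the bounded operators $L_m\mathcal{K}_m'(\varphi_m)$ peeled off at each stage do not spoil the powers of $h$. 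Once the decomposition is organized so that Lemma 4.5 applies verbatim, the rest is a routine comparison of exponents using $d \geq 2r$ and $\tilde h \leq h$.
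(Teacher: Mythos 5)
Your splitting of $\bigl(\tilde{\mathcal{K}}_n^M\bigr)'(\varphi_m)-\mathcal{K}_m'(\varphi_m)$ into $-(I-Q_n)\mathcal{K}_m'(\varphi_m)(I-Q_n)$ plus $(I-Q_n)\bigl(\mathcal{K}_m'(Q_n\varphi_m)-\mathcal{K}_m'(\varphi_m)\bigr)Q_n$ is exactly the paper's starting point, but your dismissal of the second summand as ``harmless'' contains a genuine gap. The bound you give, $\bigl\|\mathcal{K}_m'(Q_n\varphi_m)-\mathcal{K}_m'(\varphi_m)\bigr\|\,\|z_n^M-\varphi_m\|_\infty=O\bigl(h^{r}\max\{\tilde{h}^{d},h^{3r}\}\bigr)$, is \emph{not} dominated by the claimed bound: the inequality goes the wrong way, since $h^{r}\geq h^{3r}$. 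Worse, $h^{r}\cdot h^{3r}=h^{4r}$ is of the same size as the leading term of the expansion in Theorem \ref{thm:4.8}, so with only this estimate the whole asymptotic expansion would be lost. The missing ingredient is that you never exploit the factor $\mathcal{K}_m'(\varphi_m)(I-Q_n)$ standing to the left of this summand: by Taylor's theorem $\bigl(\mathcal{K}_m'(Q_n\varphi_m)-\mathcal{K}_m'(\varphi_m)\bigr)Q_n(z_n^M-\varphi_m)=\mathcal{K}_m''(\xi_m)\bigl(Q_n\varphi_m-\varphi_m,\,Q_n(z_n^M-\varphi_m)\bigr)$ with $\xi_m\in B(\varphi,\delta)$, and then $\bigl\|\mathcal{K}_m'(\varphi_m)(I-Q_n)\mathcal{K}_m''(\xi_m)\bigr\|=O(h^{2r})$ by \eqref{eq:4.3}; combined with $\|Q_n\varphi_m-\varphi_m\|_\infty=O(h^{r})$ from \eqref{eq:2.15} and $\|z_n^M-\varphi_m\|_\infty=O(\max\{\tilde{h}^{d},h^{3r}\})$ from \eqref{eq:1.7}, this yields the required $O\bigl(h^{3r}\max\{\tilde{h}^{d},h^{3r}\}\bigr)$ for this piece, which is exactly how the paper handles it.

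For the main term your target decomposition (reduce to the three pieces controlled by Lemma \ref{lemma:4.5}, the quoted quadratic-remainder estimate for $\tilde{\mathcal{K}}_n^M$, and \eqref{eq:1.7}) is the right one, but the mechanism ``insert \eqref{eq:4.7} recursively'' does not work as described, and the obstacle you flag is real: \eqref{eq:4.7} is an identity for $\mathcal{K}_m'(\varphi_m)(z_n^M-\varphi_m)$, whereas the slot after the inner $(I-Q_n)$ contains $z_n^M-\varphi_m$ alone, and after any such insertion the factors $L_m\mathcal{K}_m'(\varphi_m)$ land \emph{between} the rightmost $(I-Q_n)$ and the brackets, where they cannot be ``peeled off'' as bounded left factors; consequently \eqref{eq:4.13} and \eqref{eq:4.14} no longer apply verbatim. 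The paper avoids the resolvent entirely: from the defining equations, $z_n^M-\varphi_m=\tilde{\mathcal{K}}_n^M(z_n^M)-\mathcal{K}_m(\varphi_m)$, which is split as $\bigl[\tilde{\mathcal{K}}_n^M(z_n^M)-\tilde{\mathcal{K}}_n^M(\varphi_m)-\bigl(\tilde{\mathcal{K}}_n^M\bigr)'(\varphi_m)(z_n^M-\varphi_m)\bigr]+\bigl[\tilde{\mathcal{K}}_n^M(\varphi_m)-\mathcal{K}_m(\varphi_m)\bigr]+\bigl(\tilde{\mathcal{K}}_n^M\bigr)'(\varphi_m)(z_n^M-\varphi_m)$; applying $\mathcal{K}_m'(\varphi_m)(I-Q_n)\mathcal{K}_m'(\varphi_m)(I-Q_n)$ to these three pieces and using \eqref{eq:4.3} with the quadratic estimate, then \eqref{eq:4.13}, then \eqref{eq:4.14} together with \eqref{eq:1.7}, gives $O\bigl(h^{2r}\max\{\tilde{h}^{d},h^{3r}\}^2\bigr)+O\bigl(h^{6r}\bigr)+O\bigl(h^{3r}\max\{\tilde{h}^{d},h^{3r}\}\bigr)$, i.e.\ the stated bound, with Lemma \ref{lemma:4.5} invoked exactly in the form proved. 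Repair these two points and your argument coincides with the paper's.
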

\begin{proof}
\noindent
Note that 
\begin{eqnarray*}
 \left (  \tilde{\mathcal{K}}_n^M \right )' (\varphi_m)  -  \mathcal{K}_m'   (\varphi_m) 
& = & (I - Q_n) (\mathcal{K}_m'  (Q_n \varphi_m )  -  \mathcal{K}_m'   (\varphi_m)) Q_n \\
&& -~  (I - Q_n) \mathcal{K}_m'   (\varphi_m) (I - Q_n).
\end{eqnarray*}
By Taylor's theorem,
\begin{align*}
\left (\mathcal{K}_m' (Q_n\varphi_m) - \mathcal{K}_m' (\varphi_m)\right )Q_n   (z_n^M - \varphi_m)  = {\mathcal {K}_m''  (\xi_m)}   \left(Q_n \varphi_m - \varphi_m, ~  Q_n(z_n^M - \varphi_m)\right)
\end{align*}
where $\xi_m \in B (\varphi, \delta).$
Hence from (\ref {eq:1.7}), (\ref{eq:2.15}) and (\ref{eq:4.3}), we obtain
\begin{eqnarray}\nonumber
&&\|\mathcal{K}_m'   (\varphi_m) (I - Q_n) {\mathcal {K}_m''  (\xi_m)}   \left(Q_n \varphi_m - \varphi_m, ~  Q_n(z_n^M - \varphi_m)\right)\|_\infty \\\nonumber
& & \hspace*{2 cm} \leq
\|\mathcal{K}_m'   (\varphi_m) (I - Q_n) {\mathcal {K}_m''  (\xi_m)}  \| \|Q_n \varphi_m - \varphi_m\|_\infty \| Q_n\| \|z_n^M - \varphi_m\|_\infty \\
\label {eq:4.18}
&&  \hspace*{2 cm} =  O\left (  h^{3 r} \max \left \{ \tilde{h}^{d}, h^{3 r} \right \} \right ).
\end{eqnarray}
Note that
 \begin{eqnarray}\nonumber
 &&\mathcal{K}_m'   (\varphi_m) (I - Q_n) \mathcal{K}_m'   (\varphi_m) (I - Q_n) (z_n^M - \varphi_m)\\\nonumber
 && \hspace*{0.5 cm} = \mathcal{K}_m'   (\varphi_m) (I - Q_n) \mathcal{K}_m'   (\varphi_m) (I - Q_n)\\\nonumber
&& \hspace*{2 cm} \times \;\;\; \left [\tilde{\mathcal{K}}_n^M  (z_n^M) - \tilde{\mathcal{K}}_n^M  (\varphi_m) 
 -  \left (\tilde{\mathcal{K}}_n^M \right )' (\varphi_m) (z_n^M - \varphi_m) \right ]\\\nonumber
 && \hspace*{0.7 cm} + ~ \mathcal{K}_m'   (\varphi_m) (I - Q_n) \mathcal{K}_m'   (\varphi_m) (I - Q_n)
 \left [\tilde{\mathcal{K}}_n^M  (\varphi_m) - \mathcal{K}_m  (\varphi_m) \right ]\\\nonumber
 && \hspace*{0.7 cm} + ~ \mathcal{K}_m'   (\varphi_m) (I - Q_n) \mathcal{K}_m'   (\varphi_m) (I - Q_n)
 \left [\left (\tilde{\mathcal{K}}_n^M \right )' (\varphi_m) (z_n^M - \varphi_m)\right ].
 \end{eqnarray}
   By  (\ref{eq:4.3}) and (\ref{eq:A6}), the first  term on the right hand side of the above equation  is of the order of $h^{2r} \max \left \{ \tilde{h}^{d}, h^{3 r} \right \}^2 .$
 By (\ref{eq:4.13}) of Lemma 4.5, the second term is of the order of $h^{6r}.$
 By (\ref{eq:4.14}) of Lemma 4.5 and by (\ref{eq:1.7}), the third term is of the order of
 $h^{3 r}  \max \left \{ \tilde{h}^{d}, h^{3 r} \right \}.$
 It follows that
 \begin{align*}
 \left \|\mathcal{K}_m'   (\varphi_m) (I - Q_n) \mathcal{K}_m'   (\varphi_m) (I - Q_n) (z_n^M - \varphi_m) \right \|_\infty 
 =  O \left (  h^{3 r} \max \left \{ \tilde{h}^{d}, h^{3 r} \right \} \right ).
 \end{align*}
 The required result follows from (\ref{eq:4.18}) and the above estimate.
 \end{proof}
 
 We now prove our main result.
\begin{theorem}\label{thm:4.8} 
If $\displaystyle {d \geq 2 r, \; \frac {\partial \kappa} {\partial u}  \in C^{ \max \{d, \; 3 r + 3 \}} (\Omega),
\; \;\; \mbox {and} \;\;\;f \in C^{\max \{d, \; 2 r + 2 \}} [0, 1]},$
 then
\begin{eqnarray*}
\tilde{z}_n^M - \varphi_m =  \left( U(T (\varphi)) + \frac {U(V_1 (\varphi ))} {2} \right ) h^{4 r } 
+ O \left (  h^{2 r} \max \left \{ \tilde{h}^{d}, h^{2 r + 2} \right \} \right ).
\end{eqnarray*}
\end{theorem}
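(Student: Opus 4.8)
The plan is to combine the decomposition (\ref{eq:4.1}) with the three-term splitting (\ref{eq:4.7}) and to evaluate each of the pieces using the lemmas and propositions already established. Starting from (\ref{eq:4.1}), we have
\begin{eqnarray*}
\tilde{z}_n^M - \varphi_m = \mathcal {K}_m' (\varphi_m) (z_n^M - \varphi_m) + O \left (\max \{\tilde{h}^{d}, h^{3 r} \}^2 \right ),
\end{eqnarray*}
and the error term here is $O(h^{6r} + \tilde h^{2d})$, which is absorbed into $O(h^{2r}\max\{\tilde h^d, h^{2r+2}\})$ since $d \geq 2r$ forces $\tilde h^{2d} \leq \tilde h^d h^{2r} \leq h^{2r}\tilde h^d$, and $h^{6r}\leq h^{2r}h^{2r+2}$ whenever $r\geq 1$. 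So the whole problem reduces to finding the asymptotic expansion of $\mathcal {K}_m' (\varphi_m) (z_n^M - \varphi_m)$.

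Next I would invoke (\ref{eq:4.7}), which writes $\mathcal{K}_m'(\varphi_m)(z_n^M-\varphi_m)$ as a sum of three terms. The first term is handled by Proposition~\ref{prop:4.4}, which gives exactly
\begin{eqnarray*}
L_m \mathcal{K}_m'(\varphi_m)\{\mathcal{K}_m(\varphi_m) - \tilde{\mathcal{K}}_n^M(\varphi_m)\} = -U\left(T(\varphi) + \frac{V_1(\varphi)}{2}\right) h^{4r} + O\left(h^{2r}\max\{\tilde h^d, h^{2r+2}\}\right),
\end{eqnarray*}
so the first term of (\ref{eq:4.7}), being $-L_m\mathcal{K}_m'(\varphi_m)\{\mathcal{K}_m(\varphi_m)-\tilde{\mathcal{K}}_n^M(\varphi_m)\}$, contributes $+U(T(\varphi))h^{4r} + \frac{1}{2}U(V_1(\varphi))h^{4r}$ plus an acceptable remainder. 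The second term of (\ref{eq:4.7}) is controlled by the quoted estimate (\ref{eq:A6}) together with the bound $\|L_m\|\leq C_7$ and the boundedness of $\mathcal{K}_m'(\varphi_m)$: it is $O(\max\{\tilde h^d, h^{3r}\}^2)$, which as above is absorbed. The third term of (\ref{eq:4.7}) is exactly the quantity bounded in Proposition~\ref{prop:4.6} (again after pulling out $\|L_m\|\leq C_7$), giving $O(h^{3r}\max\{\tilde h^d, h^{3r}\}) = O(h^{2r}\max\{\tilde h^d, h^{2r+2}\})$ once more using $d\geq 2r$ and $r\geq 1$ so that $h^{3r}\cdot h^{3r} = h^{6r}\leq h^{2r}h^{2r+2}$ and $h^{3r}\tilde h^d \leq h^{2r}\tilde h^d$.

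Assembling these three contributions yields
\begin{eqnarray*}
\mathcal{K}_m'(\varphi_m)(z_n^M-\varphi_m) = \left(U(T(\varphi)) + \frac{U(V_1(\varphi))}{2}\right)h^{4r} + O\left(h^{2r}\max\{\tilde h^d, h^{2r+2}\}\right),
\end{eqnarray*}
and substituting into (\ref{eq:4.1}) gives the claimed expansion. The step requiring the most care is not any single estimate but the bookkeeping of remainder orders: one must repeatedly check that terms of the form $\tilde h^d h^{jr}$ (for $j = 3, 4, 5$) and $h^{6r}$ and $\max\{\tilde h^d, h^{3r}\}^2$ are all dominated by $h^{2r}\max\{\tilde h^d, h^{2r+2}\}$, which relies on the standing hypotheses $d\geq 2r$, $\tilde h\leq h$, and $r\geq 1$. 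All the genuinely analytic work—the Euler–MacLaurin-type expansions, the resolvent-identity manipulations of $L_m$, and the superconvergence bounds like (\ref{eq:4.4}) and (\ref{eq:4.6})—has already been carried out in Section~3 and in Lemmas~\ref{lemma:4.1}--\ref{lemma:4.5} and Propositions~\ref{prop:4.4} and~\ref{prop:4.6}, so the proof of the theorem itself is a short assembly argument.
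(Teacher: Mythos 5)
Your proposal is correct and follows essentially the same route as the paper: it assembles the identity (\ref{eq:4.1}) with the decomposition (\ref{eq:4.7}), applies Proposition \ref{prop:4.4} to the first term, the quoted estimate (\ref{eq:A6}) with $\|L_m\|\leq C_7$ to the second, and Proposition \ref{prop:4.6} to the third, with the correct sign on the leading $h^{4r}$ coefficient and correct absorption of all remainders into $O\left(h^{2r}\max\left\{\tilde h^{d}, h^{2r+2}\right\}\right)$ using $d\geq 2r$, $\tilde h\leq h$, and $r\geq 1$. This matches the paper's own proof of Theorem \ref{thm:4.8}.
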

\begin{proof}
Recall from (\ref{eq:4.1}) that
\begin{eqnarray}\nonumber
\tilde{z}_n^M - \varphi_m =  \mathcal {K}_m' (\varphi_m) (z_n^M - \varphi_m) 
+ O \left (\max \left \{\tilde{h}^{d}, h^{3 r} \right \}^2 \right).
\end{eqnarray}
Recall from (\ref{eq:4.7}) that
\begin{eqnarray}\nonumber
&&\mathcal{K}_m'   (\varphi_m) ( z_n^M - \varphi_m ) \\\nonumber
& = &  
 - L_m  \mathcal{K}_m'   (\varphi_m) \left \{  \mathcal {K}_m (\varphi_m) -  \tilde{\mathcal{K}}_n^M (\varphi_m) \right \}\\\nonumber
&&+ L_m  \mathcal{K}_m'   (\varphi_m) \left \{
\tilde{\mathcal{K}}_n^M (z_n^M) - \tilde{\mathcal{K}}_n^M (\varphi_m)  - \left (  \tilde{\mathcal{K}}_n^M \right )' (\varphi_m) 
(z_n^M - \varphi_m) \right \}\\\nonumber
&& + L_m  \mathcal{K}_m'   (\varphi_m) \left \{
\left (\left (  \tilde{\mathcal{K}}_n^M \right )' (\varphi_m)  -  \mathcal{K}_m'   (\varphi_m) \right  ) (z_n^M - \varphi_m) \right \}.
\end{eqnarray}
Thus, by Proposition \ref{prop:4.4}, estimate (\ref{eq:A6})  and Proposition \ref{prop:4.6}, we obtain
\begin{eqnarray*}
\mathcal{K}_m'   (\varphi_m) ( z_n^M - \varphi_m ) =  \left( U(T (\varphi)) + \frac {U(V_1 (\varphi ))} {2} \right ) h^{4 r } 
+ O \left (  h^{2 r} \max \left \{ \tilde{h}^{d}, h^{2 r + 2} \right \} \right )
\end{eqnarray*}
and the proof is complete.
\end{proof}

We can now apply one step of Richardson extrapolation and obtain approximations of $\varphi$ of higher order.
Define
$$ z_n^{E} = \frac { 2^{ 4 r } \tilde {z}_{2 n}^M -  \tilde{z}_n^M} { 2^{ 4 r } - 1}.$$
Then we have the following result.

\begin{corollary} \label {Cor:1}
Under the assumptions of Theorem \ref{thm:4.8},
\begin{equation}  \label{eq:4.22}
\| z_n^{E} - \varphi_m  \|_\infty = O \left (  h^{2 r} \max \left \{ \tilde{h}^{d}, h^{2 r + 2} \right \} \right ).
\end{equation}
\end{corollary}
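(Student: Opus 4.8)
The plan is to reduce the corollary to a direct algebraic consequence of Theorem~\ref{thm:4.8}. First I would apply the theorem twice: once with mesh $h = 1/n$ and once with mesh $h/2 = 1/(2n)$ (noting that $m$ is replaced by $2m = 2pn$ in the finer instance, so the fine mesh $\tilde h$ halves as well, while the ratio $p$ and the degree of precision $d$ are unchanged). This yields
\begin{eqnarray*}
\tilde{z}_n^M - \varphi_m &=& G(\varphi) \, h^{4r} + O\left( h^{2r} \max\left\{ \tilde h^{d}, h^{2r+2} \right\} \right), \\
\tilde{z}_{2n}^M - \varphi_{2m} &=& G(\varphi) \, \left(\tfrac{h}{2}\right)^{4r} + O\left( \left(\tfrac{h}{2}\right)^{2r} \max\left\{ \left(\tfrac{\tilde h}{2}\right)^{d}, \left(\tfrac{h}{2}\right)^{2r+2} \right\} \right),
\end{eqnarray*}
where I write $G(\varphi) = U(T(\varphi)) + \tfrac{1}{2} U(V_1(\varphi))$ for the $h$-independent leading coefficient.

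The key subtlety is that the two instances of the theorem involve two \emph{different} Nystr\"om solutions, $\varphi_m$ and $\varphi_{2m}$, whereas the corollary compares $z_n^E$ against a single $\varphi_m$. So the second step is to control $\varphi_m - \varphi_{2m}$. Since both satisfy $\|\varphi - \varphi_m\|_\infty = O(\tilde h^d)$ and $\|\varphi - \varphi_{2m}\|_\infty = O((\tilde h/2)^d)$ by estimate~(\ref{eq:2.8}), the triangle inequality gives $\|\varphi_m - \varphi_{2m}\|_\infty = O(\tilde h^d)$. Because $\tilde h \le h$ and $d \ge 2r$, we have $\tilde h^d = \tilde h^{2r} \tilde h^{d-2r} \le h^{2r} \tilde h^{d-2r}$, which is absorbed into $O\left( h^{2r} \max\{\tilde h^d, h^{2r+2}\}\right)$; more crudely, $\tilde h^d \le h^d \le h^{2r}\cdot h^{2r+2}$ when $d \ge 4r+2$, but the sharper bookkeeping above works for all $d \ge 2r$ since $\tilde h^d \le h^{2r}\tilde h^d$. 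Hence replacing $\varphi_{2m}$ by $\varphi_m$ in the second expansion costs only a term of the stated order, and
$$\tilde{z}_{2n}^M - \varphi_m = G(\varphi)\, \frac{h^{4r}}{2^{4r}} + O\left( h^{2r} \max\left\{ \tilde h^{d}, h^{2r+2} \right\} \right).$$

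The final step is the extrapolation cancellation. By definition $z_n^E - \varphi_m = \dfrac{2^{4r}\left(\tilde z_{2n}^M - \varphi_m\right) - \left(\tilde z_n^M - \varphi_m\right)}{2^{4r}-1}$. Substituting the two expansions, the leading terms combine as $\dfrac{2^{4r}\cdot G(\varphi) h^{4r}/2^{4r} - G(\varphi) h^{4r}}{2^{4r}-1} = \dfrac{G(\varphi) h^{4r} - G(\varphi) h^{4r}}{2^{4r}-1} = 0$, so the $h^{4r}$ contribution vanishes identically, while the remainder terms are each $O\left( h^{2r} \max\{\tilde h^d, h^{2r+2}\}\right)$ and remain so after multiplication by the fixed constants $2^{4r}/(2^{4r}-1)$ and $1/(2^{4r}-1)$. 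This gives~(\ref{eq:4.22}). The only real obstacle is the bookkeeping with two distinct Nystr\"om solutions in Step~2; once one observes that their difference is $O(\tilde h^d)$ and that $\tilde h^d$ fits inside the target error bound because $\tilde h \le h$ and $d \ge 2r$, the rest is the standard Richardson cancellation.
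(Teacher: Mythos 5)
Your Richardson cancellation (Step 3) is exactly the intended argument — the paper states the corollary without proof precisely because, once Theorem \ref{thm:4.8} is applied at levels $n$ and $2n$ \emph{with the same Nystr\"om solution} $\varphi_m$, the $h^{4r}$ terms cancel and the remainders are already of the stated order. The problem is your Step 2. You assume that passing from $n$ to $2n$ also replaces $m$ by $2m$, which forces you to compare against two different Nystr\"om solutions and to absorb $\|\varphi_m-\varphi_{2m}\|_\infty=O(\tilde{h}^{d})$ into $O\bigl(h^{2r}\max\{\tilde{h}^{d},h^{2r+2}\}\bigr)$. That absorption is false: your inequality ``$\tilde{h}^{d}\le h^{2r}\tilde{h}^{d}$'' is backwards, since $h^{2r}\le 1$ gives $h^{2r}\tilde{h}^{d}\le\tilde{h}^{d}$; and in the variant ``$\tilde{h}^{d}\le h^{2r}\tilde{h}^{d-2r}$'' the factor $\tilde{h}^{d-2r}$ is \emph{not} bounded by $\max\{\tilde{h}^{d},h^{2r+2}\}$ (take $d=2r$, so $\tilde{h}^{d-2r}=1$). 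Concretely, with $p=1$ (so $\tilde{h}=h$) and $d=2r$, the leftover term $\varphi_{2m}-\varphi_m$ is generically of order $h^{2r}$, while the claimed bound is $h^{4r}$; under your reading the corollary would simply be false, and it would also never yield the refinement (\ref{eq:4.25}), since $O(\tilde{h}^{d})$ cannot be improved to $O(h^{4r+2})$ by choosing $\tilde{h}^{d}\le h^{2r+2}$.

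The repair is interpretive, not technical: in the corollary the composite quadrature, and hence $m$, $\tilde{h}$, $\mathcal{K}_m$ and $\varphi_m$, are held \emph{fixed} while only the projection mesh is refined from $n$ to $2n$ (one needs $2n\mid m$ so that Theorem \ref{thm:4.8} applies at the finer level); this is exactly the setting of the paper's $r=1$ experiment, where $m=256$ is fixed and $n=2,\dots,32$. Then both expansions
\begin{equation*}
\tilde{z}_n^M-\varphi_m=G(\varphi)h^{4r}+O\bigl(h^{2r}\max\{\tilde{h}^{d},h^{2r+2}\}\bigr),\qquad
\tilde{z}_{2n}^M-\varphi_m=G(\varphi)\tfrac{h^{4r}}{2^{4r}}+O\bigl(h^{2r}\max\{\tilde{h}^{d},h^{2r+2}\}\bigr),
\end{equation*}
refer to the same $\varphi_m$, there is no $\varphi_m-\varphi_{2m}$ term to absorb, and your final cancellation goes through verbatim.
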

If we choose $\tilde{h}$ and $d$ such that $\tilde{h}^{d} \leq h^{2 r + 2},$ then
\begin{equation}\label{eq:4.25}
\| z_n^{E} - \varphi_m  \|_\infty = O \left (  h^{4 r + 2} \right ).
\end{equation}

\setcounter{equation}{0}
\section{Numerical Results}

In this section we quote the results from Kulkarni-Nidhin \cite{Kul3} to illustrate the improvement of orders of convergence by the Richardson extrapolation obtained in (\ref{eq:4.25}).

Consider
\begin{equation}\label{eq:5.1}
 \varphi (s) - \int_0^1 \frac {d s} {s + t + \varphi (t)}   = f (s), \;\;\; 0 \leq s \leq 1,
 \end{equation}
where $f$ is so chosen that
$\displaystyle { \varphi (t) = \frac {1} { t + 1}}$
is a solution of (\ref{eq:5.1}). 

We consider the following uniform partition of $[0, 1]:$
\begin{equation}\label{eq:5.2}
0 < \frac{1}{n} <\frac{2}{n} < \cdots < \frac{n}{n}=1.
\end{equation}
For $ r = 1, 2,$ let $\mathcal{X}_n$  be the space of piecewise polynomials of degree $\leq r - 1$ 
with respect to the partition (\ref{eq:5.2}). The collocation points are chosen to be $r$ Gauss points in
each subinterval. 

If $\mathcal{X}_n$ is the space of piecewise constant functions,  then we choose the composite 2 point Gaussian quadrature
with respect to the uniform partition of $[0, 1]$ with $256$ intervals. The computations are done for $n = 2, 4, 8, 16$ and $32.$ 
Thus, $$ r = 1, \; d = 4, \; \tilde{h} = 2^{-8}, \; h \geq 2^{-5}  \;\;\;
\mbox{and hence} \;\;\;  \tilde{h}^d = 2^{- 32 }\leq 2^{- 20 } \leq h^{2 r + 2}$$
and the conditions of Theorem 4.7 are satisfied.

If $\mathcal{X}_n$ is the space of piecewise linear functions, then  we chose the composite 2 point Gaussian quadrature
with respect to the uniform partition of $[0, 1]$ with $n^2$ intervals. Thus,
$$r = 2, \; \tilde {h} = h^2, \; d = 4 \; \mbox{and hence} \;\;\;  \tilde{h}^d = h^8 \leq h^{6} = h^{2 r + 2}$$
and the conditions of Theorem 4.7 are satisfied.

\begin{center}
Table 4.4, {\bf Interpolation at Midpoints: $ r = 1$}
\vspace*{0.5 cm}

\begin{tabular} {|c|cc|cc|cc|}\hline
$n$ 
& $\| \varphi - z_n^M \|_\infty$ & $\delta^M$ & $\| \varphi  - \tilde{z}_n^M\|_\infty$ & $\delta^{IM}$ & $\| \varphi - z_n^{E} \|_\infty$ & $\delta^{E}$\\
\hline
2&     $ 6.92 \times 10^{-3}   $ &               & $  3.30 \times 10^{- 4}   $ &  &  & \\
4&    $ 1.02 \times 10^{- 3}   $ & $ 2.76 $ & $ 2.13 \times 10^{- 5}   $ & $ 3.95 $ & $ 7.31 \times 10^{-7}   $ &  \\
8&    $ 1.40 \times 10^{- 4}  $ & $ 2.87 $ & $ 1.34 \times 10^{-  6} $ & $ 3.99 $ & $ 6.81 \times 10^{- 9}   $ & $ 6.75 $ \\
16&    $ 1.82 \times 10^{- 5}  $ & $ 2.94 $ & $ 8.37 \times 10^{- 8} $ & $ 4.00 $ & $ 8.46 \times 10^{- 11}   $ & $ 6.33 $ \\
32&    $ 2.27 \times 10^{- 6 }   $ & $ 3.00 $ & $ 5.23 \times 10^{- 9 }$ & $ 4.00 $ & $ 1.01 \times 10^{- 12}   $ & $ 6.39 $ \\\hline
\end{tabular}
\end{center}

\vspace*{0.5 cm}

\begin{center}  Table 4.6, {\bf Interpolation at Gauss 2 points: $ r = 2$}

\vspace*{0.5 cm}

\begin{tabular} {|c|cc|cc|cc|}\hline

$n$ 
& $\| \varphi - z_n^M \|_\infty$ & $\delta^M$ & $\| \varphi  - \tilde{z}_n^M\|_\infty$ & $\delta^{IM}$ & $\| \varphi - z_n^{E} \|_\infty$ & $\delta^{E}$\\
\hline

2&     $ 5.06 \times 10^{-4}   $ &               & $ 6.47 \times 10^{-5}   $ & & & \\
4&     $ 1.07 \times 10^{-5}   $ & $ 5.56 $ & $ 2.09 \times 10^{-7}   $ & $ 8.27 $& $ 4.37 \times 10^{-8} $&   \\
8&     $ 1.85 \times 10^{- 7}  $ & $ 5.86 $ & $ 8.45 \times 10^{-10} $ & $ 7.95 $& $ 2.67 \times 10^{- 11}   $ &  $10.68$ \\
16&    $ 3.07 \times 10^{- 9}  $ & $ 5.90 $ & $ 3.35 \times 10^{- 12} $ & $ 7.98 $ & $ 4.73 \times 10^{- 14}   $ & $ 9.14 $ \\
32&  $ 4.74 \times 10^{- 11}   $ & $6.02$ & $ 1.34 \times 10^{-14}$ & $7.96$ & $ 2.11 \times 10^{- 15}   $ & $ 4.49 $    \\\hline
\end{tabular}
\end{center}

\newpage
The expected orders of convergence from (\ref{eq:1.7}), (1.7)  and (\ref{eq:4.22}) are as follows:



Discrete Modified Projection  Solution:  $\delta^M = 3 r,\;\; $ 

Discrete Iterated Modified Projection Solution: $\delta^{IM} = 4  r, \;\;\ $ 

Extrapolated Solution: $\delta^{E} =  4 r + 2.$


It is clear from the above tables that the computed orders of convergence match well with the theoretical 
orders of convergence and that the extrapolated solution improves upon the discrete iterated modified 
projection solution.

\end{document}